\documentclass[10pt,reqno]{amsart}
\pdfoutput=1 
\usepackage[foot]{amsaddr}
\usepackage{etoolbox}
\usepackage[a4paper,margin=3cm]{geometry}
\usepackage[utf8]{inputenc} 
\usepackage[T1]{fontenc}    
\usepackage{hyperref}       
\usepackage{url}            
\usepackage{booktabs}       
\usepackage{amsfonts}       
\usepackage{nicefrac}       
\usepackage{microtype}      
\usepackage{graphicx}       
\usepackage{tikz}           
\usepackage{natbib}         
\usepackage[ruled,vlined]{algorithm2e}  
\usepackage{amsmath,amssymb,amsthm} 
\usepackage{dsfont}                 
\usepackage{xcolor}                 
\usepackage{enumitem}               
\usepackage{cancel}
\usepackage{hyperref}

\title{Efficient~online~algorithms~for fast-rate~regret~bounds~under~sparsity}

\author{Pierre Gaillard$^1$}
\address{$^1$ INRIA - Département d’informatique de l’ENS \\
  Ecole normale supérieure, CNRS, INRIA \\ 
  PSL Research University, 75005 Paris, France}
\author{Olivier Wintenberger$^2$}
\address{$^2$ Sorbonne Université, LPSM, Paris, France}

\email{pierre.gaillard@inria.fr}
\email{olivier.wintenberger@upmc.fr}


\setlength\parindent{0pt}
\setlength{\parskip}{\medskipamount}%
\patchcmd{\section}{\scshape}{\bfseries\scshape}{}{}
\makeatletter
\renewcommand{\@secnumfont}{\bfseries}
\makeatother

\makeatletter
\def\paragraph{\@startsection{paragraph}{4}%
  \z@\z@{-\fontdimen2\font}%
  {\normalfont\bfseries}}
\makeatother

\newtheorem{theorem}{Theorem}[section]
\newtheorem{proposition}[theorem]{Proposition}
\newtheorem{lemma}[theorem]{Lemma}

\newtheorem{definition}{Definition}[section]

\theoremstyle{remark}
\newtheorem{remark}{Remark}[section]

\newcommand{\1}{\ensuremath{\mathrm{1}\hspace{-.35em} \mathrm{1}}} 

\makeatletter
\newcommand{\generate}[4]{%
  \def\@tempa{#1} 
  \count@=`#3
  \loop
  \begingroup\lccode`?=\count@
  \lowercase{\endgroup\@namedef{\@tempa ?}{#2{?}}}%
  \ifnum\count@<`#4
  \advance\count@\@ne
  \repeat
}
\generate{c}{\mathcal}{A}{Z}
\newcommand{\R}{\mathbb{R}}

\newcommand{\E}{\mathbb{E}}

\renewcommand{\log}{\ln}

\renewcommand{\hat}{\widehat}
\renewcommand{\leq}{\leqslant}
\renewcommand{\le}{\leqslant}
\renewcommand{\geq}{\geqslant}

\renewcommand{\epsilon}{\varepsilon}
\renewcommand{\b}{\boldsymbol}
\newcommand{\indic}{\mathds{1}}
\def\mystrut(#1,#2){\vrule height #1pt depth #2pt width 0pt}

\DeclareMathOperator*{\argmin}{arg\,min}

\DeclareMathOperator{\Card}{Card}
\DeclareMathOperator{\Reg}{Reg}

\DeclareMathOperator{\sign}{sign}
\DeclareMathOperator{\Supp}{Supp}

\makeatletter
\newcommand{\pushright}[1]{\ifmeasuring@#1\else\omit\hfill$\displaystyle#1$\fi\ignorespaces}
\newcommand{\pushleft}[1]{\ifmeasuring@#1\else\omit$\displaystyle#1$\hfill\fi\ignorespaces}
\makeatother

\newcounter{saveenum}

\begin{document}

\begin{abstract}
We consider the online convex optimization problem. In the setting of arbitrary sequences and finite set of parameters, we establish a new fast-rate  quantile regret bound. Then we investigate the optimization into the $\ell_1$-ball by discretizing the parameter space. Our algorithm is projection free and we propose an efficient solution by restarting the algorithm on adaptive discretization grids. In the adversarial setting, we develop an algorithm that achieves several rates of convergence with different dependences on the sparsity of the objective. In the i.i.d. setting, we establish new risk bounds that are adaptive to the sparsity of the problem and to the regularity of the risk (ranging from a rate $\nicefrac{1}{\sqrt{T}}$ for general convex risk to $\nicefrac{1}{T}$ for strongly convex risk). These results generalize previous works on sparse online learning. They are obtained under a weak assumption on the risk (Łojasiewicz's assumption) that allows multiple optima which is crucial when dealing with degenerate situations.
\end{abstract}

\maketitle 

\section{Introduction}

We consider the following setting of online convex prediction. Let $(\ell_t:\R^d \to \R)_{t\geq 1}$ be a collection of random convex sub-differentiable loss functions sequentially observed.  At each time step $t \geq 1$, a learner forms a prediction $\smash{\hat \theta_{t-1}\in \R^d}$ based on past observations $\smash{\cF_{t-1} = \{\ell_1, \hat \theta_1,\dots,\ell_{t-1},\hat \theta_{t-1}\}}$. The learner aims at minimizing its average risk
\begin{equation}
  \label{eq:regret}
  R_T(\theta) := \frac{1}{T} \sum_{t=1}^T \E_{t-1}\big[\ell_t(\hat \theta_{t-1})\big] - \frac{1}{T} \sum_{t=1}^T \E_{t-1}\big[\ell_t(\theta)\big]  \qquad \text{where} \quad \E_{t-1} = \E[\,\cdot\,|\cF_{t-1}] \,,
\end{equation}
with respect to all $\theta$ in some reference set $\Theta \subseteq  \cB_1 := \{\theta \in \R^d: \|\theta\|_1 \leq 1\}$. By considering the Dirac masses, one obtains   $\ell_t = \E_{t-1}[\ell_t]$ and the average risk matches the definition $\smash{(\nicefrac{1}{T})\sum_{t=1}^T \big(\ell_t(\hat \theta_{t-1}) - \ell_t(\theta)\big)}$ of the average regret more commonly used in the online learning literature. We will first consider finite set $\Theta$. Then we will show how to extend the results to the unit $\ell_1$-ball $\cB_1$ providing sparsity guarantees for sparse $\theta \in \cB_1$.

\medskip
\paragraph*{Related work} 
The case of finite reference set $\Theta$ corresponds to the setting of prediction with expert advice (see Section~\ref{sec:experts} or \citep{Cesa-BianchiLugosi2006,freund1997decision,vovk1998game}), where a learner makes sequential predictions over a series of rounds with the help of $K$ experts. \cite{littlestone1994weighted} and \cite{vovk1990aggregating} introduced the exponentially weighted average algorithm (Hedge) which achieves the optimal rate of convergence $\cO(\nicefrac{1}{\sqrt{T}})$ for the average regret for general convex functions. Several works focused on improving the rate of convergence under nice properties of the loss or the data. For instance, Hedge ensures a rate $\cO(\nicefrac{1}{T})$ for exp-concave loss functions. We refer to \cite{van2015fast} for a thorough review of fast-rate type assumptions on the losses.  

The extension from finite reference sets to convex sets is natural. The seminal paper~\cite{KivinenWarmuth1997} introduced the Exponentiated Gradient algorithm (EG), a version of Hedge using gradient version of the losses. The latter guarantees a $\cO(\smash{\nicefrac{1}{\sqrt{T}}})$ average regret uniformly over the unit $\ell_1$-ball $\cB_1$. Another approach consists in projecting gradient descent steps (see~\cite{Zinkevich2003} for general convex set, \cite{DuchiEtAl2008} for the $\ell_1$-ball, or \cite{AgarwalNegahbanWainwright2012} for fast rates under sparsity). 

First works in i.i.d. online convex optimization under sparsity was done by~\cite{AgarwalNegahbanWainwright2012,GaillardWintenberger2017,Steinhardt2014} that obtained sparse rates of order $\smash{\tilde \cO(\|\theta^*\|_0\log d/T)}$\footnote{Throughout the paper $\lesssim$ denotes an approximative inequality which holds up to universal constants and $\tilde \cO$ denotes an asymptotic inequality up to logarithmic terms in $T$ and dependence on parameters not clarified.}. Their settings are very close to the one of \cite{bunea2007} used for studying the convergence properties of the LASSO batch procedure. Their methods differ; the one of \cite{Steinhardt2014} uses a $\ell_1$-penalized gradient descent whereas the one of \cite{AgarwalNegahbanWainwright2012} and \cite{GaillardWintenberger2017} are based on restarting a subroutine centered around the current estimate, on sessions of exponentially growing length. These works compete with the optima over $\R^d$ assumed to be (approximately in~\cite{AgarwalNegahbanWainwright2012}) sparse with a known $\ell_1$-bound. In contrast, we only compete here with optima over $\cB_1$ which are more likely to be sparse.

Little work was done on sparsity under adversarial data.  The papers \cite{langford2009sparse,Xiao2010,duchi2010composite} focus on providing sparse estimators with rates of order $\smash{\cO(\nicefrac{1}{\sqrt{T}})}$ or a linear dependency on the dimension $d$. Recent work (see \cite{foster2016online,kale2017adaptive} and references therein) considers the problem where the learner only observes a sparse subset of coordinates at each round. Though they also compare themselves with sparse parameters, they also suffer a bound larger than $\smash{\cO(\nicefrac{1}{\sqrt{T}})}$.  Fast rate sparse regret bounds involving $\|\theta\|_0$ were, to our knowledge, only obtained through non-efficient procedures (see \cite{Gerchinovitz2011} or \cite{rakhlin2015online}). 

\medskip
\paragraph*{Contributions and outline of the paper}

In this paper we focus on providing fast rate regret bounds involving the sparsity of the objective $\|\theta\|_0$.

In Section~\ref{sec:finite} we start with the finite case $\Theta = \{\theta_1,\dots,\theta_K\}$. We extend the results of~\cite{Wintenberger2014} and  \cite{vanErvenKoolen2015} under a weak version of exp-concavity, see Assumption~\ref{ass:a2}. We show in Theorem~\ref{thm:BOA_fastrate} that the Bernstein Online Aggregation (BOA) and Squint algorithms achieve a fast rate with high probability: i.e. $R_{T}(\theta) \leq \cO((\ln K)/T)$
for arbitrary data. The theorem also provides a quantile bound on the risk which improves the dependency on $K$ if many experts are performing well. 
This is the first quantile-like bound on the average risk that provides fast-rate with high probability. \citet{mehta2016fast} developed high-probability quantile bounds but it was degrading with an additional gap term.

In Section~\ref{sec:B1}, we consider the case $\Theta=\cB_1$. The standard reduction using the ``gradient trick'' of \cite{KivinenWarmuth1997}, looses the fast-rate guaranty obtained under Assumption~\ref{ass:a2}. Considering BOA on a discretization grid $\Theta_0$ of $\Theta$ and applying Theorem~\ref{thm:BOA_fastrate} yields optimal convergence rate under~\ref{ass:a2}. Yet, the complexity of the discretization is prohibitive. 
We thus investigate how an a-priori discretization grid $\Theta_0$ may be used to improve the regret bound. We provide in Theorem~\ref{thm:slowrate} a bound of the form $\smash{R_T(\theta) \leq \cO(D(\theta,\Theta_0)/{\sqrt{T}})}$ which we call \emph{accelerable}, i.e. the rate may decrease if $\smash{D(\theta,\Theta_0)}$ decreases with $T$. Here $D$ is a pseudo-metric that we call \emph{averaging accelerability} and $D(\theta,\Theta_0)$ is the distance of $\theta$ with $\Theta_0$ in this pseudo-metric. Our bound yields an oracle bound of the form $\smash{R_T(\theta) \leq \cO({\|\theta\|_1}/{\sqrt{T}})}$ 
which was recently studied by~\cite{Foster2017}. The following sections \ref{sec:adv} and \ref{sec:acc} build the grid $\Theta_0$ adaptively in order to ensure a small regret under a sparsity scenario: Section~\ref{sec:adv} in the adversarial setting and Section~\ref{sec:acc} for i.i.d. losses. 

In Section~\ref{sec:adv}, we work under the strong convexity assumption on the losses in the adversarial setting. Using a doubling trick, we show that including sparse versions of the leader of the last session in $\Theta_0$ is enough to ensure that  $\smash{R_T(\theta) \leq \tilde \cO\big(({\sqrt{d\|\theta\|_0}}/{T})\wedge({\sqrt{\|\theta\|_0}}/T^{3/4})\big)}$ for all $\smash{\theta \in \cB_{1}}$. The rate is faster than the usual rate of convergence $\smash{\tilde \cO(d/T)}$ obtained by online gradient descent or online newton step \cite{hazan2007}. The gain $\smash{\sqrt{\|\theta\|_0/d}\wedge\sqrt{\|\theta\|_0/T}}$ is significant for sparse parameters $\theta$. The numerical and space complexities of the algorithm, called BOA+, are $\smash{\tilde \cO(dT)}$. Notice that the rate can be decreased to $\smash{\tilde \cO(d_0/T)}$ whenever the leaders and the parameter $\theta$ are $d_0$-sparse. This favorable case is not likely to happen in the adversarial setting but do happen in the i.i.d. setting treated in Section~\ref{sec:acc}.

A new difficulty raises in the i.i.d. setting: we accept only assumptions on the risk $\E[\ell_t]$ and not on the losses $\ell_t$. To do so, we need to enrich the grid $\Theta_0$ with good approximations of the optima of the risk $\E[\ell_t]$. However, the risk is not observed and the minimizer of the empirical risk (the leader) suffer a rate of convergence linear in $d$. Thus, we develop another algorithm, called SABOA, that sequentially enriches $\Theta_0$ by averaging the estimations of the algorithms on the last session. We extend the setting of strong convexity on $\R^d$ of the preceding results of~\cite{Steinhardt2014,GaillardWintenberger2017,AgarwalNegahbanWainwright2012} to the weaker Łojasiewicz's assumption~\ref{ass:Lojasiewicz} on the $\ell_1$-ball only. The latter was introduced by \cite{Loja63,Loja93} and states that there exist $\beta >0$ and $\mu>0$ such that for all $\theta \in \cB_1$, it exists a minimizer  $\theta^*$ of the risk over $\cB_1$ satisfying 
\[
  \mu \big\|\theta - \theta^*\big\|_2^2 \leq \E[\ell_t(\theta)-\ell_t(\theta^*)]^\beta \,.
\]
The Łojasiewicz's assumption depends on a parameter $\beta \in [0,1]$ that ranges from general convex functions ($\beta=0$) to strongly convex functions ($\beta=1$). Under this condition our algorithm achieves a fast rate upper-bound on the average risk of order $\smash{\tilde \cO((\|\theta^*\|_0\log(d)/T)^{1/(2-\beta)})}$ when the optimal parameters have $\ell_1$-norm bounded by $c<1$. When some optimal parameters $\theta^*$ lie on the border of the ball, the bound suffers an additional factor $\|\theta^*\|_0$.  Łojasiewicz's Assumption~\ref{ass:Lojasiewicz} also allows multiple optima which is crucial when we are dealing with degenerated collinear design (allowing zero eigenvalues in the Gram matrix). The complexity of the algorithm, called SABOA, is $\smash{\tilde \cO(dT)}$ and it is fully adaptive to all parameters except for the Lipschitz constant. 

To summarize our contributions, we provide
\begin{itemize}[label={-},nosep,topsep=-5pt,parsep=0pt,itemsep=0pt,leftmargin=15pt]
  \item the first hight-probability quantile bound achieving a fast rate (Theorem~\ref{thm:BOA_fastrate});
  \item a new bound on $R_T(\theta)$ that is small whenever $\theta$ is close to a grid provided in hindsight (Thm.~\ref{thm:slowrate});
  \item two efficient algorithms with improved average risks when $\theta$ is sparse in the adversarial setting with strongly convex losses (BOA+, Thm.~\ref{thm:adversarial}) and in the i.i.d. setting with Łojasiewicz's assumption (SABOA, Thm.~\ref{thm:acceleratedBOA}). 
\end{itemize}

\section{Finite reference set}
\label{sec:finite}
In this section, we focus on finite reference set $\Theta := \{\theta_1,\dots,\theta_K\} \subset \cB_1$. This is the case of the setting of prediction with expert advice presented in Section~\ref{sec:experts}.
We will consider the following two assumptions on the loss:
\begin{enumerate}[label={(A\arabic*)},topsep=-5pt,parsep=2pt,itemsep=0pt]

  \item\label{ass:a1}\emph{Lipschitz loss\footnote{Throughout the paper, we assume that  the Lipschitz constant $G $ in~\ref{ass:a1} is known. It can be calibrated online with standard tricks such as the doubling trick (see~\cite{CesaBianchiMansourStoltz2007} for instance) under sub-Gaussian conditions.}}: $\nabla \ell_t$ are sub-differential and for all $t \geq 1$, $\max_{\theta \in \cB_1}  \big\|\nabla \ell_t\big\|_\infty\leq G$.

  \item\label{ass:a2}\emph{Weak exp-concavity:} There exist  $\alpha >0$ and $\beta \in [0,1]$ such that for all $t \geq 1$, for all $\theta_1,\theta_2 \in \cB_1$, almost surely 
  \[
    \E_{t-1}\!\big[\ell_t(\theta_1) - \ell_t(\theta_2)\!\big] \leq \E_{t-1}\big[ \nabla \ell_t(\theta_1)^\top (\theta_1 - \theta_2)\big] \\
    - \E_{t-1} \!\Big[\!\left(\alpha \big(\nabla \ell_t(\theta_1)^\top(\theta_1 - \theta_2)\big)^{2} \right)^{1/\beta} \!\Big].
  \]
    \setcounter{saveenum}{\value{enumi}}
\end{enumerate}

\smallskip
For convex losses $(\ell_t)$, Assumption~\ref{ass:a2} is satisfied with $\beta = 0$ and $\alpha < G^{-2}$.  Fast rates are obtained for $\beta >0$. It is worth pointing out that Assumption \ref{ass:a2} is weak even in the strongest case $\beta=1$. It is implied by several common assumptions such as:
\begin{itemize}[label={--},topsep=-3pt,leftmargin=15pt, itemsep=0pt,parsep=2pt]
  \item \emph{Strong convexity of the risk}: under the boundedness of the gradients, assumption \ref{ass:a2} with $\alpha =\mu/(2G ^2)$ is implied by the $\mu$-strong convexity of the risks $(\E_{t-1}[\ell_t])$.
  \item \emph{Exp-concavity of the loss}: Lemma 4.2, \citet{Hazan2016} states that \ref{ass:a2} with $\alpha \leq \frac{1}{4}\min\{\frac{1}{8G},\kappa\}$ is implied by $\kappa$-exp-concavity of the loss functions $(\ell_t)$. Our assumption is slightly weaker since its  needs to hold in conditional expectation only.

\end{itemize}

\subsection{Fast-rate quantile bound with high probability}
\label{sec:fastrate_expert}

For prediction with $K\geq 1$ expert advice, \cite{Wintenberger2014} showed that a fast rate $\cO\big((\ln K)/T\big)$ can be obtained by the BOA algorithm under the LIST condition (i.e., Lipschitz and strongly convex losses) and i.i.d. estimators. Here, we show that Assumption~\ref{ass:a2} is enough.
By using the Squint algorithm of \cite{vanErvenKoolen2015} (see Algorithm~\ref{alg:BOA}), we also replace the dependency on the total number of experts with a quantile bound. The latter is smaller when many experts perform well. Note that Algorithm~\ref{alg:BOA} uses Squint with a discrete prior over a finite set of learning rates. It corresponds to BOA of \cite{Wintenberger2014}, where each expert is replicated multiple times with different constant learning rates. The proof (with the exact constants) is deferred to Appendix~\ref{app:proof_BOAfastrate}.

\begin{algorithm}[!t]
\caption{Squint -- BOA with multiple constant learning rates assigned to each parameter}
    \label{alg:BOA}
        {\bfseries Inputs:} $\Theta_0 = \{\theta_1,\dots,\theta_K\} \subset \cB_1$, $E>0$  and $\hat \pi_0 \in \Delta_K\footnotemark$.\\
    {\bfseries Initialization:} For $1 \leq i \leq \log(ET)$, define $\eta_{i} := (e^{i}E)^{-1}$ \\
    For $t=1,\dots,T$ 
    \begin{itemize}[label={--},topsep=2pt,parsep=2pt,itemsep=2pt]
      \item predict $\hat \theta_{t-1} = \sum_{k=1}^K \hat \pi_{k,t-1} \theta_k$ and observe $\nabla \ell_t(\hat \theta_{t-1})$,
      \item update component-wise for all $1\le k \le K$
        \[
            \hat \pi_{k,t} = \frac{\sum_{i=1}^{\log(ET)}\eta_{i}e^{\eta_{i} \sum_{s=1}^{t} (r_{k,s} - \eta_{i} r_{k,s}^2) }\pi_{k,0}}{\sum_{i'=1}^{\log(ET)}\E_{j\sim \hat \pi_0}\big[\eta_{i'}e^{\eta_{i'} \sum_{s=1}^{t} (r_{j,s} - \eta_{i'} r_{j,s}^2) }\big]} \,, 
            \ \text{where} \ r_{k,s} = \nabla \ell_t(\hat\theta_{s-1})^\top(\hat\theta_{s-1}-\theta_k) \,.
        \]
        .
    \end{itemize}
\end{algorithm}

\begin{theorem}
\label{thm:BOA_fastrate}
  Let $\Theta = \{\theta_1,\dots,\theta_K\} \subset\cB_1$ and $x>0$. Assume \ref{ass:a1} and \ref{ass:a2}. Apply Algorithm~\ref{alg:BOA} with grid $\Theta_0 = \Theta$, parameter $E = 4G/3$ and initial weight vector $\hat \pi_0 \in \Delta_K$\footnotetext{Throughout the paper, we denote the simplex of dimension $K\ge 1$ as $\Delta_K=\{\theta \in [0,\infty)^k;\; \|\theta\|_1=1, \|\theta\|_0=1\}$.}. Then, for all $T \geq 1$ and all $\pi \in \Delta_K$, with probability at least $1-2e^{-x}$
  \[
   \E_{k \sim \pi} \left[R_{T}(\theta_k)  \right]
      \lesssim  \left(\frac{\cK(\pi,\hat \pi_0)+ \log\log(GT)  + x }{\alpha T} \right)^{\frac{1}{2-\beta}} \,,
\]
where $\cK(\pi,\hat \pi_0) := \sum_{k=1}^K \pi_k \log(\pi_k/\hat \pi_{k,0})$ is the Kullback-Leibler divergence.
\end{theorem}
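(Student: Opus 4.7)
Let $r_{k,t}\eqdef \nabla\ell_t(\hat\theta_{t-1})^\top(\hat\theta_{t-1}-\theta_k)$; convexity gives $\ell_t(\hat\theta_{t-1})-\ell_t(\theta_k)\leq r_{k,t}$, and \ref{ass:a1} yields $|r_{k,t}|\leq 2G$ since $\|\hat\theta_{t-1}-\theta_k\|_1\leq 2$. The argument decomposes into three ingredients: a deterministic second-order regret bound for Squint on the linearised losses, a PAC-Bayes Freedman deviation that converts the observed $r_{k,t}$ into their conditional expectations uniformly in $\pi$, and a self-bounding step based on~\ref{ass:a2} that trades the quadratic surrogate for the average risk.

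\textbf{Step 1 (Squint second-order inequality).} Algorithm~\ref{alg:BOA} is BOA mixed uniformly over the geometric grid $\eta_i=(e^iE)^{-1}$, $i=1,\dots,\log(ET)$, so the analysis of~\cite{Wintenberger2014,vanErvenKoolen2015} yields, for every $\pi\in\Delta_K$ and every $\eta$ in the grid,
\[
   \sum_{t=1}^T\E_{k\sim\pi}[r_{k,t}]\;\leq\;\eta\sum_{t=1}^T\E_{k\sim\pi}[r_{k,t}^2]+\frac{\cK(\pi,\hat\pi_0)+\log\log(ET)}{\eta}.
\]
The choice $E=4G/3$ keeps $\eta|r_{k,t}|$ below a universal constant over the whole grid, which legitimises the exponential-MGF controls used inside this bound.

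\textbf{Step 2 (uniform-in-$\pi$ Bernstein).} For each fixed $k$, the bound $|r_{k,s}|\leq 2G$ makes
\[
   M_t^{k,\lambda}\eqdef\exp\!\Big(\lambda\!\sum_{s\leq t}\!\big(\E_{s-1}[r_{k,s}]-r_{k,s}\big)-c\lambda^2\!\sum_{s\leq t}\!\E_{s-1}[r_{k,s}^2]\Big)
\]
a supermartingale for $\lambda\leq 1/(4G)$. Integrating $M_T^{k,\lambda}$ against $\hat\pi_0$, then combining Ville's inequality with the Donsker--Varadhan variational identity, produces with probability $\geq 1-e^{-x}$ \emph{simultaneously} over all $\pi\in\Delta_K$ the deviation
\[
   \sum_t\E_{k\sim\pi}\E_{t-1}[r_{k,t}]\;\leq\;\sum_t\E_{k\sim\pi}[r_{k,t}]+c\lambda\sum_t\E_{k\sim\pi}\E_{t-1}[r_{k,t}^2]+\frac{\cK(\pi,\hat\pi_0)+x}{\lambda}.
\]
A twin Freedman bound for the martingale $r_{k,t}^2-\E_{t-1}[r_{k,t}^2]$ (its conditional variance is $\leq 4G^2\E_{t-1}[r_{k,t}^2]$) exchanges $\sum r_{k,t}^2$ and $\sum\E_{t-1}[r_{k,t}^2]$; combined with Step 1 this gives, w.p.\ $\geq 1-2e^{-x}$ and uniformly in $\pi$ and in $\eta$ on the grid,
\[
   \sum_t\E_{k\sim\pi}\E_{t-1}[r_{k,t}]\;\lesssim\;\eta\sum_t\E_{k\sim\pi}\E_{t-1}[r_{k,t}^2]+\frac{\cK(\pi,\hat\pi_0)+\log\log(ET)+x}{\eta}.
\]
This is the delicate step of the proof: the deviation must hold simultaneously over all $\pi$, since a naive union bound would cost $\log K$ and destroy the quantile improvement. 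The PAC-Bayes/Ville trick transfers uniformity in $\pi$ onto the KL-penalty $\cK(\pi,\hat\pi_0)$, and must be glued to Step 1 in a way that reuses the same quadratic term without doubling constants or losing the grid adaptation.

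\textbf{Step 3 (self-bounding and tuning).} Averaging~\ref{ass:a2} for $(\theta_1,\theta_2)=(\hat\theta_{t-1},\theta_k)$ over $k\sim\pi$ and summing over $t$ gives
\[
   T\,\E_{k\sim\pi}[R_T(\theta_k)]\;\leq\;\sum_t\E_{k\sim\pi}\E_{t-1}[r_{k,t}]-\alpha^{1/\beta}\sum_t\E_{k\sim\pi}\E_{t-1}[r_{k,t}^{2/\beta}].
\]
Since $\beta\in(0,1]$ the map $x\mapsto x^{1/\beta}$ is convex, so conditional Jensen yields $\E_{t-1}[r_{k,t}^{2/\beta}]\geq(\E_{t-1}[r_{k,t}^2])^{1/\beta}$, and two further Jensen applications (in $t$ and in $k$) lower-bound the subtracted sum by $\alpha^{1/\beta}T^{1-1/\beta}V^{1/\beta}$ with $V\eqdef\sum_t\E_{k\sim\pi}\E_{t-1}[r_{k,t}^2]$. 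Plugging in Step 2 one obtains
\[
   T\,\E_{k\sim\pi}[R_T(\theta_k)]\;\lesssim\;\eta V+\frac{A}{\eta}-\alpha^{1/\beta}T^{1-1/\beta}V^{1/\beta},\qquad A\eqdef\cK(\pi,\hat\pi_0)+\log\log(ET)+x.
\]
Maximising the right-hand side in $V\geq 0$ absorbs the linear contribution at cost $C(\beta)(\eta/\alpha)^{1/(1-\beta)}T$, and tuning $\eta\asymp\alpha^{1/(2-\beta)}(A/T)^{(1-\beta)/(2-\beta)}$ — which lies in $[1/T,1/E]$ and is therefore matched by the grid up to a universal constant (the slack absorbed by the $\log\log(ET)$ term) — delivers the announced rate $\E_{k\sim\pi}[R_T(\theta_k)]\lesssim(A/(\alpha T))^{1/(2-\beta)}$. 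The boundary cases $\beta=0$ (pure convexity, so the negative term vanishes and one optimises $\eta\asymp\sqrt{A/T}$) and $\beta=1$ (where $\alpha V$ is directly subtracted, forcing $\eta\leq\alpha$ and $R_T\lesssim A/(\alpha T)$) recover the classical slow and fast rates respectively.
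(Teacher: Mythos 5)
The proposal is correct and follows essentially the same route as the paper's proof: the Squint/BOA second-order regret bound over the geometric grid of learning rates, two martingale deviation inequalities (uniform in $\pi$ via a PAC-Bayes/Ville argument, matching the paper's use of Theorem~4.1 of \cite{Wintenberger2014} and the Poissonian inequality of \cite{GaillardWintenberger2017}) to pass to conditional expectations, and Assumption~\ref{ass:a2} combined with a Young/Legendre trade-off of the quadratic term. The only differences are presentational — a single globally tuned $\eta$ with the grid edge cases checked at the end, and an aggregated maximization over $V$, where the paper tunes per-expert rates first and then frees the parameter via Young's inequality pointwise — and these do not change the argument.
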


A fast rate of this type (without quantiles property) can be obtained in expectation by using the exponential weight algorithm (Hedge) for exp-concave loss functions. However, Theorem~\ref{thm:BOA_fastrate} is stronger. First, Assumption~\ref{ass:a2} only needs to hold on the risks $\E_{t-1}[\ell_t]$, which is much weaker than exp-concavity of the losses  $\ell_t$. It can hold  for absolute loss or quantile regression under regularity conditions. Second, the algorithm uses the so-called gradient trick. Therefore, simultaneously with upper-bounding the average risk $\smash{\cO(T^{-1/(2-\beta)})}$ with respect to the experts $(\theta_k)$, the algorithm achieves the slow rate $\smash{\cO(1/\sqrt{T})}$ with respect to any convex combination (similarly to EG). Finally, we recall that our result holds with high-probability, which is not the case for Hedge (see~\cite{Audibert2008}).

If the algorithm is run with a uniform prior $\hat \pi_0 = (1/K,\dots,1/K)$, Theorem~\ref{thm:BOA_fastrate} implies that for any subset $\Theta'\subseteq \Theta$, with high probability 
\[
  \textstyle{
    \max_{\theta \in \Theta'} R_T(\theta) \lesssim \left(\frac{\log(K/\Card(\Theta')) + \log\log (GT)}{\alpha T}\right)^{\frac{1}{2-\beta}} \,.}
\]
One only pays the proportion of good experts $\log(K/\Card(\Theta'))$ instead of the total number of experts $\log(K)$. This is the advantage of quantile bounds. We refer to~\cite{vanErvenKoolen2015} for more details, who obtained a similar result for the regret (not the average risk). Such quantile bounds on the risk were studied by \citet[Section~7]{mehta2016fast} in a batch i.i.d. setting (i.e., $\ell_t$ are i.i.d.). A standard online to batch conversion of our results shows that in this case, Theorem~\ref{thm:BOA_fastrate} yields with high probability for any $\pi \in \Delta_K$
\[ \textstyle{
  \E_T\Big[\ell_{T+1}(\bar \theta_T) - \E_{k\sim \pi}\big[\ell_{T+1}(\theta_k)\big]\Big] \lesssim  \left(\frac{\cK(\pi,\hat \pi_0)+ \log\log(GT)  + x }{\alpha T} \right)^{\frac{1}{2-\beta}} \quad \text{where} \quad \bar \theta_T = (\nicefrac{1}{T})\sum_{t=1}^T \hat \theta_{t-1} \,.}
\]
This improves the bound obtained by~\cite{mehta2016fast} who suffers the additional gap 
\[
  \textstyle{(e-1) \ \E_T\big[\E_{k\sim \pi}[\ell_{T+1}(\theta_k)]- \min_{\pi^*\in \Delta_K} \ell_{T+1}(\E_{j\sim\pi^*}[\theta_j])\big] \,.}
\]

\subsection{Prediction with expert advice}
\label{sec:experts}

The framework of prediction with expert advice is widely considered in the literature (see~\cite{Cesa-BianchiLugosi2006} for an overview). We recall now this setting and how it can be included in our framework. At the beginning of each round $t$, a finite set of $K \geq 1$ experts forms predictions $\smash{\b f_{t} = (f_{1,t},\dots,f_{K,t}) \in [0,1]^K}$ that are included into the history $\cF_{t-1}$. The learner then chooses a weight vector $\smash{\hat \theta_{t-1}}$  in the simplex $\smash{\Delta_K := \{\theta \in \R_+^K: \|\theta\|_1 = 1\}}$ and produces a prediction $\smash{\hat f_t := \hat \theta_{t-1}^\top \b f_t \in \R}$ as a linear combination of the experts. Its performance at time $t$ is evaluated thanks to a loss function\footnote{For instance, $g_t$ can be the square loss with respect to some observation $y \mapsto (y-y_t)^2$.} $g_t:\R \to \R$.  The goal of the learner is to approach the performance of the best expert on a long run. This can be done by minimizing the average risk
$
  \smash{R_{k,T} := \frac{1}{T} \sum_{t=1}^T \E_{t-1}[g_t(\hat f_t)] - \E_{t-1}[g_t(f_{k,t})] \,,}
$
with respect to all experts $k\in \{1,\dots,K\}$.

This setting reduces to our framework with dimension $d=K$. Indeed, it suffices to choose the $K$-dimensional loss function $\smash{\ell_t: \theta \mapsto g_t(\theta^\top \b f_t)}$ and the canonical basis $\smash{\Theta := \{\theta\in \R_+^K: \|\theta\|_1=1,\|\theta\|_0=1\}}$ in $\R^K$ as the reference set. Denoting by $\theta_k$ the $k$-th element of the canonical basis, we see that $\theta_k^\top \b f_t = f_{k,t}$, so that $\ell_t(\theta_k) = g_t(f_{k,t})$. Therefore, $R_{k,T}$ matches our definition of $R_T(\theta_k)$ in Equation~\eqref{eq:regret} and we get under the assumptions of Theorem~\ref{thm:BOA_fastrate} a bound of order:
\[
  \textstyle{  \E_{k \sim \pi}\big[R_{k,T}\big] \lesssim  \Big(\frac{\cK(\pi,\hat \pi_0)+\log\log(GT) + x}{\alpha T}\Big)^{\frac{1}{2-\beta}} \,.}
\]

It is worth to point out that though the parameters $\theta_k$ of the reference set are constant, this method can be used to compare the player with arbitrary strategies $f_{k,t}$ that may evolve over time and depend on recent data. This is why we do not want to assume here that there is a single fixed expert $k^* \in \{1,\dots,K\}$ which is always the best, i.e., $\E_{t-1}[g_t(f_{k^*,t})] \leq \min_k \E_{t-1}[g_t(f_{k,t})]$. Hence, we cannot replace \ref{ass:a2} with the closely related Bernstein assumption (see Ass.~\eqref{eq:Bernstein} or \cite[Cond.~1]{koolen2016combining}).

In this setting, Assumption~\ref{ass:a2} can be reformulated on the one dimensional loss functions $g_t$ as follows: there exist $\alpha >0$ and $\beta \in [0,1]$ such that for all $t \geq 1$, for all $0 \le f_1,f_2 \le 1$,
\[
    \E_{t-1}[g_t(f_1) - g_t(f_2)] \leq \E_{t-1}\big[ g'_t(f_1) (f_1 - f_2)\big]
    - \E_{t-1} \bigg[\left(\alpha \big(g'_t(f_1)(f_1 - f_2)\big)^{2} \right)^{1/\beta} \bigg]\,,\quad a.s.
\]
It holds with $\alpha =\kappa/(2G ^2)$ for $\kappa$-strongly convex risk $\E_{t-1}[g_t]$.  For instance, the square loss $g_t = (\,\cdot-y_t\,)^2$ satisfies it with $\beta = 1$ and $\alpha = 1/8$.

\section{Online optimization in the unit \texorpdfstring{$\ell_1$}{L1}-ball}
\label{sec:B1}
The aim of this section is to extend the preceding results to the reference set $\Theta = \cB_1$ instead of finite $\Theta = \{\theta_1,\dots,\theta_K\}$.
A classical reduction from the expert advice setting to the $\ell_1$-ball is the so-called ``gradient-trick''. A direct analysis on BOA applied to the 2d corners of the $\ell_1$-ball  suffers  a slow rate $\cO(\nicefrac{1}{\sqrt{T}})$ on the average risk. 
The goal is to exhibit algorithms that go beyond $\cO(\nicefrac{1}{\sqrt{T}})$. In view of the fast rate in Theorem \ref{thm:BOA_fastrate} the program is clear; in order to accelerate BOA, one has to add in the grid of experts some points of the $\ell_1$-ball to the 2d corners. In Section \ref{sec:discretization} one investigate the cases of non adaptive grids that are optimal but yields unfeasible (NP) algorithm. In Section \ref{sec:fix} we introduce a pseudo-metric in order to bound the regret of grids consisting of the 2d corners and some arbitrary fixed points. From this crucial step, we then derive the form of the adaptive points we have to add to the 2d corners, in the adversarial case, Section \ref{sec:adv}, and in the i.i.d. case, Section \ref{sec:acc}.

\subsection{Warmup: fast rate by discretizing the space}
\label{sec:discretization}

As a warmup, we show how to use Theorem~\ref{thm:BOA_fastrate} in order to obtain fast rate on $R_T(\theta)$ for any $\theta \in \cB_1$. Basically, if the parameter $\theta$ could be included into the grid $\Theta_0$, Theorem~\ref{thm:BOA_fastrate} would turn into a bound on the regret $R_T(\theta)$ with respect to $\theta$. However, this is not possible as we do not know $\theta$ in advance. A solution consists in approaching $\cB_1$ with $\cB_1(\epsilon)$, a fixed finite $\epsilon$-covering in $\ell_1$-norm of minimal cardinal. In dimension $d$, it is known that $\smash{\Card (\cB_1(\epsilon))  \lesssim \big(1/{\epsilon}\big)^{d}}$. We obtain the following near optimal rate for the regret on $\cB_1$.

\begin{proposition}
\label{prop:discretization}
Let $x>0$ and $T\geq 1$. Under Assumptions of Theorem~\ref{thm:BOA_fastrate}, applying Algorithm~\ref{alg:BOA} with grid $\Theta_0 =  \cB_1(T^{-2})$ and uniform prior $\hat \pi_0$ over $\Delta_{\Card(\cB_1(T^{-2}))}$ satisfies for all $\theta \in \cB_1$
\begin{equation*}
    R_T(\theta) \lesssim  \Big(\frac{d \log T + \log\log(GT) + x }{\alpha T}\Big)^{\frac1{2-\beta}} + \frac{G  }{T^2} \,.
\end{equation*}
\end{proposition}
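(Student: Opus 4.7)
The plan is to reduce to Theorem~\ref{thm:BOA_fastrate} by first comparing $\theta$ to its nearest neighbor on the covering grid, and then controlling the discretization error via the Lipschitz property.

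First, given an arbitrary $\theta \in \cB_1$, the defining property of the covering $\cB_1(T^{-2})$ gives a point $\theta_0 \in \cB_1(T^{-2})$ with $\|\theta - \theta_0\|_1 \leq T^{-2}$. Under Assumption~\ref{ass:a1}, each $\ell_t$ is $G$-Lipschitz with respect to $\|\cdot\|_1$ (since $\|\nabla \ell_t\|_\infty \leq G$), hence
\[
R_T(\theta) = R_T(\theta_0) + \frac{1}{T}\sum_{t=1}^T \E_{t-1}\big[\ell_t(\theta_0) - \ell_t(\theta)\big] \leq R_T(\theta_0) + \frac{G}{T^2}.
\]

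Second, I apply Theorem~\ref{thm:BOA_fastrate} to the finite grid $\Theta_0 = \cB_1(T^{-2})$ of cardinality $K = \Card(\cB_1(T^{-2}))$ with the uniform prior $\hat\pi_0$, taking $\pi = \delta_{\theta_0}$ to be the Dirac mass on $\theta_0$. Then $\E_{k\sim\pi}[R_T(\theta_k)] = R_T(\theta_0)$ and the Kullback--Leibler divergence simplifies to $\cK(\delta_{\theta_0}, \hat\pi_0) = \log K$. Theorem~\ref{thm:BOA_fastrate} thus yields, with probability at least $1-2e^{-x}$,
\[
R_T(\theta_0) \lesssim \left(\frac{\log K + \log\log(GT) + x}{\alpha T}\right)^{\frac{1}{2-\beta}}.
\]

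Third, I use the standard covering bound $K = \Card(\cB_1(T^{-2})) \lesssim T^{2d}$ recalled in the text, which gives $\log K \lesssim d \log T$. Plugging this into the previous display and combining with the Lipschitz discretization step yields
\[
R_T(\theta) \lesssim \left(\frac{d\log T + \log\log(GT) + x}{\alpha T}\right)^{\frac{1}{2-\beta}} + \frac{G}{T^2},
\]
which is exactly the claimed bound. Since $\theta \in \cB_1$ was arbitrary and the event of probability at least $1-2e^{-x}$ granted by Theorem~\ref{thm:BOA_fastrate} does not depend on $\theta$, the bound holds uniformly. There is no substantive obstacle here: the proof is a routine covering-plus-Lipschitz reduction layered on top of Theorem~\ref{thm:BOA_fastrate}. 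The only mild subtlety is that the KL term degrades from $\log K$ to $d\log T$ because $K$ scales exponentially in $d$, which is precisely why Proposition~\ref{prop:discretization} is described as a ``warmup'' whose complexity is prohibitive and whose improvement is the goal of the remainder of Section~\ref{sec:B1}.
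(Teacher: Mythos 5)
Your proof is correct and follows exactly the route of the paper's own (much terser) proof: replace $\theta$ by its $T^{-2}$-neighbor in the cover at a Lipschitz cost of $G/T^2$, then apply Theorem~\ref{thm:BOA_fastrate} with a Dirac mass on that neighbor so that $\cK(\pi,\hat\pi_0)=\log\Card(\cB_1(T^{-2}))\lesssim d\log T$. Nothing is missing; you have simply spelled out the details the paper leaves implicit.
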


\begin{proof}
Let $\epsilon = \nicefrac{1}{T}^2$ and $\theta  \in  \cB_1$ and $\tilde \theta $ be its $\epsilon$-approximation in $\smash{\cB_1(\epsilon)}$.  The proof follows from Lipschitzness of the loss: $R_T(\theta) \leq R_T(\tilde \theta) + G\epsilon$; followed  by applying Theorem~\ref{thm:BOA_fastrate} on $R_T(\tilde \theta)$. 
\end{proof}

Following this method and inspired by the work of~\cite{RigolletTsybakov2011}, one can improve $d$ to $\|\theta\|_0 \log d$ by carefully choosing the prior $\hat \pi_0$; see Appendix~\ref{app:discretization} for details.
The obtained rate is optimal up to log-factors. However, the complexity of the discretization is prohibitive (of order $T^d$) and non realistic for practical purpose.

\subsection{Regret bound for arbitrary fixed discretization grid}
\label{sec:fix}\label{sec:slowrate}

Let $\Theta_0 \subset \cB_1$ of finite size. The aim of this Section is to study the regret of Algorithm~\ref{alg:BOA} with respect to any $\theta \in \cB_1$ when applied with the grid $\Theta_0$. Similarly to Proposition~\ref{prop:discretization}, the average risk may be bounded as 
\begin{equation}
  \label{eq:crude}
    \textstyle{
    R_T(\theta) \lesssim  \Big(\frac{\log\Card(\Theta_0)+\log \log T +x }{\alpha T}\Big)^{\frac1{2-\beta}} + G \|\theta'-\theta\|_1 \,,}
\end{equation}
for any $\theta'\in \Theta_0$. We say that a regret bound is \emph{accelerable} if it provides a fast rate except a term depending on the distance with the grid (i.e., the term in $\|\theta'-\theta\|_1$ in~\eqref{eq:crude}) which vanishes to zero. This property will be crucial in obtaining fast rates by enriching the grid $\Theta_0$. Hence the regret bound~\eqref{eq:crude} is not accelerable due to the second term that is constant. In order to find an accelerable regret bound, we introduce the notion  of \emph{averaging accelerability}, a pseudo-metric that replaces the $\ell_1$-norm in~\eqref{eq:crude}.  We define it now formally but we will give its intuition in the sketch of proof of Theorem~\ref{thm:slowrate}.
\begin{definition}[averaging accelerability] \label{def:distance}
For any $\theta,\theta'\in \cB_1$, we define 
\begin{equation*}
  \textstyle{D(\theta,\theta') := \min \big\{0\leq \pi \leq 1:   \|\theta - (1-\pi) \theta'\|_1 \leq \pi \big\}\,.}
\end{equation*}
\end{definition}
This averaging accelerability has several nice properties. In Appendix~\ref{app:pseudometric}, we provide a few concrete upper-bounds in terms of classical distances. For instance,  Lemma~\ref{lem:rlarge} provides the upper-bound
$\smash{D(\theta,\theta') \leq \|\theta-\theta'\|_1/(1-\|\theta'\|_1\wedge\|\theta\|_1)}$. 
We are now ready to state our regret bound, when Algorithm~\ref{alg:BOA} is applied with an arbitrary approximation grid~$\Theta_0$.

\begin{theorem}
\label{thm:slowrate}
Let $x>0$. Let $\Theta_0 \subset \cB_1$ of finite size such that $\{\theta:\|\theta\|_1=1,\|\theta\|_0=1\}\subseteq \Theta_0$. Let Assumption~\ref{ass:a1} and \ref{ass:a2} be satisfied. Then, Algorithm~\ref{alg:BOA} applied with $\Theta_0$, uniform weight vector $\hat \pi_0$ over the elements of $\Theta_0$ and $E = 8 G/3$, satisfies with probability $1-e^{-x}$,
\[
  R_T( \theta)
  \lesssim   \left(\frac{a}{\alpha T} \right)^{\frac{1}{2-\beta}} + G  D(\theta,\Theta_0) \sqrt{\frac{a}{T}}  + \frac{aG}{T} \,,
\]
for all $\theta \in \cB_1$, where
$
a = \log \Card(\Theta_0) + \log \log (GT)+ x$ and $D(\theta,\Theta_0):=\min_{\theta'\in \Theta_0}D(\theta,\theta')$.
\end{theorem}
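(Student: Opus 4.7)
My approach is to write the target $\theta \in \cB_1$ as a convex combination of a well-chosen point of $\Theta_0$ and a ``residual'' living in $\cB_1$, then to exploit the fact that this residual can itself be expanded as a mixture of the $2d$ corners, all of which sit in $\Theta_0$ by assumption. The resulting distribution on $\Theta_0$ will be plugged into the Bernstein-type master inequality underlying Theorem~\ref{thm:BOA_fastrate}, applied once with Assumption~\ref{ass:a2} activated (fast rate, for the grid point $\theta'$) and once without (slow rate, for the residual). A crucial feature of Definition~\ref{def:distance} is that the residual sits in $\cB_1$, so that the slow-rate contribution will be weighted by $\pi = D(\theta,\Theta_0)$ itself rather than its square root, which is exactly what ``accelerability'' requires.

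Concretely, I would pick $\theta' \in \argmin_{\theta_0 \in \Theta_0} D(\theta, \theta_0)$, set $\pi := D(\theta,\Theta_0)$, and observe that Definition~\ref{def:distance} makes $v := \pi^{-1}(\theta-(1-\pi)\theta')$ an element of $\cB_1$. Since $\{\pm e_i\}_{i=1}^d \subseteq \Theta_0$ and $\cB_1 = \mathrm{conv}\{\pm e_i\}$, the residual expands as $v = \sum_i \lambda_i v_i$ with $\lambda \in \Delta_{2d}$ and $v_i \in \Theta_0$. Consequently $\theta = \E_{k\sim q}[\theta_k]$ for the distribution $q$ placing mass $1-\pi$ on $\theta'$ and mass $\pi\lambda_i$ on $v_i$. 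Convexity of each $\ell_t$ then yields the linearized upper bound
\[
R_T(\theta)\;\leq\;\tilde R_T(\theta)\;:=\;\tfrac{1}{T}\sum_t\E_{t-1}\!\bigl[\nabla\ell_t(\hat\theta_{t-1})^\top(\hat\theta_{t-1}-\theta)\bigr],
\]
which is affine in $\theta$, so that $R_T(\theta) \leq (1-\pi)\tilde R_T(\theta') + \pi\sum_i\lambda_i\tilde R_T(v_i)$.

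The proof of Theorem~\ref{thm:BOA_fastrate} (Appendix~\ref{app:proof_BOAfastrate}) in fact establishes a Bernstein-type master inequality: on a single event of probability at least $1-e^{-x}$, simultaneously in $\mu \in \Delta_K$,
\[
\textstyle\sum_k \mu_k\tilde R_T(\theta_k) \;\lesssim\; \sqrt{V_T(\mu)\,a/T}\;+\;Ga/T,\qquad V_T(\mu) := \sum_k\mu_k\tfrac{1}{T}\sum_t\E_{t-1}[r_{k,t}^2],
\]
where $r_{k,t} = \nabla\ell_t(\hat\theta_{t-1})^\top(\hat\theta_{t-1}-\theta_k)$ and $a = \log\Card(\Theta_0)+\log\log(GT)+x$. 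I would invoke it in two regimes. For $\mu=\delta_{\theta'}$, Jensen's inequality applied to Assumption~\ref{ass:a2} self-bounds $V_T(\delta_{\theta'})$ by a power of $\tilde R_T(\theta')$, and solving the resulting implicit inequality (exactly as in the proof of Theorem~\ref{thm:BOA_fastrate}) gives the fast rate $\tilde R_T(\theta') \lesssim (a/(\alpha T))^{1/(2-\beta)}+Ga/T$. For $\mu=\lambda$ (supported on the corners), I drop Assumption~\ref{ass:a2} and use only the deterministic bound $V_T(\lambda)\leq 4G^2$, stemming from $|r_{v_i,t}|\leq G\,\|\hat\theta_{t-1}-v_i\|_1\leq 2G$ under Assumption~\ref{ass:a1}, which yields $\sum_i\lambda_i\tilde R_T(v_i)\lesssim G\sqrt{a/T}+Ga/T$. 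Combining the two estimates with weights $1-\pi$ and $\pi$ and recalling $\pi = D(\theta,\Theta_0)$ produces the claimed bound. The main subtlety is that the two applications of the master inequality must live on the same high-probability event; this is ensured for free because BOA/Squint's inequality is uniform in $\mu$.
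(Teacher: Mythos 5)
There is a genuine gap at the step where you claim the fast rate $\tilde R_T(\theta')\lesssim(a/(\alpha T))^{1/(2-\beta)}+Ga/T$ for the \emph{linearized} regret of the grid point $\theta'$. Assumption~\ref{ass:a2} applied to the pair $(\hat\theta_{t-1},\theta')$ reads (after moving the expectation inside the power by Jensen)
\[
\big(\alpha\,\E_{t-1}[r_{\theta',t}^2]\big)^{1/\beta}\;\leq\;\E_{t-1}[r_{\theta',t}]\;-\;\E_{t-1}\big[\ell_t(\hat\theta_{t-1})-\ell_t(\theta')\big]\,,
\]
so it converts the linearized regret into the \emph{excess risk} at the price of a variance term: the fast rate it delivers is for $R_T(\theta')$, not for $\tilde R_T(\theta')$. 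For an arbitrary grid point $\theta'$ the term $\E_{t-1}[\ell_t(\hat\theta_{t-1})-\ell_t(\theta')]$ has no sign, so you do not obtain the Bernstein-type self-bound of $V_T(\delta_{\theta'})$ by a power of $\tilde R_T(\theta')$ that your argument needs; that self-bound is exactly the stronger condition~\eqref{eq:Bernstein}, which the paper only assumes for risk \emph{minimizers}. Without it, the master inequality gives only $\tilde R_T(\theta')\lesssim G\sqrt{a/T}$, and your decomposition collapses back to the non-accelerable bound~\eqref{eq:crude}. Nor can you substitute $R_T(\theta')$ for $\tilde R_T(\theta')$ in your splitting: $R_T$ is \emph{concave} in the comparator, so $R_T(\theta)\geq(1-\pi)R_T(\theta')+\pi R_T(v)$ — the inequality goes the wrong way, which is precisely why you linearized in the first place.

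The paper's actual proof (Appendix~\ref{app:proof_slowrate}) resolves this differently. It keeps the mixture $\pi$ over $\Theta_0$ representing $\theta$, applies the master inequality to get a bound involving $\E_{k\sim\pi}\big[\sqrt{a\sum_t r_{k,t}^2}\big]$, and splits each $r_{k,t}=r_{\theta,t}+\nabla\ell_t(\hat\theta_{t-1})^\top(\theta-\theta_k)$ \emph{inside the variance}. This produces the variance $\sum_t r_{\theta,t}^2$ of the full comparator — which is then cancelled via Young's inequality against the negative term supplied by Assumption~\ref{ass:a2} applied to the pair $(\hat\theta_{t-1},\theta)$ — plus a cross term $\E_{k\sim\pi}\big[\sqrt{a\sum_t(\nabla\ell_t(\hat\theta_{t-1})^\top(\theta-\theta_k))^2}\big]\lesssim\epsilon G\sqrt{aT}$, using $\|\theta-\theta_{k^*}\|_1\leq2\epsilon$ and the fact that the mass off $\theta_{k^*}$ is at most $\epsilon$. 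That cross term is the source of $G\,D(\theta,\Theta_0)\sqrt{a/T}$ in the statement. Your plan essentially reproduces the heuristic sketch given in the paper's main text, but the sketch glosses over exactly the point where the argument breaks, and the appendix proof takes a materially different route to repair it.
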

\begin{proof}[Sketch of proof]
The complete proof can be found in Appendix~\ref{app:proof_slowrate} but we give here the high-level idea of the proof. 
Let $\theta$ be the unknown parameter the algorithm will be compared with. Let $\theta'\in \Theta_0$ a point in the grid $\Theta_0$ minimizing $D(\theta,\theta')$.  
Then one can  decompose $\theta = (1-\epsilon)\theta'+ \epsilon \theta''$ for a unique point $\|\theta''\|_1=1$ and $\epsilon := D(\theta,\theta')$. See Appendix~\ref{app:proof_slowrate} for details. In the analysis, the regret bound with respect to $\theta$ can be decomposed into two terms:
\begin{itemize}[nosep,topsep=-4pt,parsep=0pt,itemsep=0pt,label={--}]
  \item The first one quantifies the cost of picking $\theta' \in \Theta_0$, bounded using Theorem~\ref{thm:BOA_fastrate}; 
  \item The second one is the cost of learning $\theta''\in \cB_1$ rescaled by $\epsilon$. Using a classical slow-rate bound in $\cB_1$, it is of order $\cO(1/\sqrt{T})$. 
\end{itemize}
The average risk $\Reg(\theta)$ is thus of the order
\[
    \mystrut(2,15) \smash{  (1-\epsilon)\underbrace{\Reg(\theta')}_{\text{Thm~\ref{thm:BOA_fastrate}}} + \epsilon \hspace*{-12pt}\underbrace{\Reg(\theta'')}_{G\sqrt{\log(\Card\Theta_0))/T}}  \hspace*{-10pt} \lesssim \Big(\frac{\log \Card(\Theta_0)+\log \log (GT) +x }{\alpha T}\Big)^{\frac1{2-\beta}} + \epsilon G \sqrt{\frac{\log \Card(\Theta_0)}{T}}}\,.  \qedhere
\] 
\end{proof}

 Note that the bound of Theorem~\ref{thm:slowrate} is \emph{accelerable} as it vanishes to zero on the contrary to Inequality~\eqref{eq:crude}. Theorem~\ref{thm:slowrate} provides an upper-bound which may improve the rate $\cO(\nicefrac{1}{\sqrt{T}})$ if the distance $D(\theta,\Theta_0)$ is small enough. By using the properties of the averaging accelerability (see Lemma~\ref{lem:rlarge} in Appendix~\ref{app:pseudometric}), Theorem~\ref{thm:slowrate} provides some interesting properties of the rate in terms of $\ell_1$ distance. By including $0$ into our approximation grid $\Theta_0$, we get a an oracle-bound of order $\cO(\nicefrac{\|\theta\|_1}{\sqrt{T}})$ for any $\theta \in \cB_1$. 
Furthermore, it also yields for any $\|\theta\|_1 \leq 1- \gamma < 1$, a bound of order $\smash{R_T(\theta) \leq \cO\big(\|\theta-\theta_k\|_1 /(\gamma \sqrt{T})\big)}$ for all $\theta_k \in \Theta_0$. 

It is also interesting to notice that the bound on the gradient $G$ can be substituted with the averaged gradient 
observed by the algorithm. This allows to replace $G$ with the level of the noise in certain situations with vanishing gradients (see for instance Theorem~3 of \citet{GaillardWintenberger2017}).

\subsection{Fast-rate sparsity regret bound under adversarial data}\label{sec:adv}

In this section, we focus on the adversarial case where $\ell_t = \E_{t-1}[\ell_t]$ are $\mu$-strongly convex deterministic functions. In this case, Assumption~\ref{ass:a2} is satisfied with $\beta =1$ and $\alpha = \mu/(2G^2)$.
Our algorithm, called BOA+, is defined as follows. For $i\geq 0$, it predicts from time step $t_i=2^i$ to $t_{i+1}-1$, by restarting Algorithm~\ref{alg:BOA} with uniform prior, parameter $E = 4G/3$  and updated discretization grid $\Theta_0$ indexed by~$i$: 
\[
  \Theta^{(i)} = \{[\theta^*_{i}]_k,k=1,\dots,d\} \cup \{\theta:\|\theta\|_1 = 2,\|\theta\|_0=1\}\,,
\]
where $
  \theta^*_i  \in \argmin_{\theta \in \cB_{1}} \sum_{t=1}^{t_i-1} \ell_t(\theta) 
$
is the empirical risk minimizer (or the leader) until time $t_i-1$. The notation $[\,\cdot\,]_{k}$ denotes the hard-truncation with $k$ non-zero values. Remark that $\theta^*_i$ for $i=1,2,\dots,\log_2(T)$ can be efficiently computed approximatively as the solution of a strongly convex optimization problem.

\begin{theorem} \label{thm:adversarial}
Assume the losses are $\mu$-strongly convex on $\cB_2:=\{\theta\in \R^d:\|\theta\|_1\leq 2\}$ with gradients bounded by $G$ in $\ell_\infty$-norm. The average regret of BOA+ is upper-bounded for all $\theta \in \cB_{1}$ as:
\[
   R_T(\theta) \leq \tilde \cO \left( \min\left\{ G \sqrt{\frac{\log d}{T}},  \sqrt{\frac{\|\theta\|_0}{\mu}} \left(G \sqrt{\frac{\log d}{T}}\right)^{\frac{3}{2}} ,  \frac{ \sqrt{\|\theta\|_0 d} G^2 \log d  }{\mu T}  \right\} \right) \,.
\]
\end{theorem}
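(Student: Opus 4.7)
The plan is to analyze BOA+ session by session. On each dyadic segment $S_i = [t_i, t_{i+1}-1]$ of length $L_i = 2^i$, BOA is restarted fresh with grid $\Theta^{(i)}$ of size at most $3d$, and Assumption~\ref{ass:a2} holds with $\beta = 1$ and $\alpha = \mu/(2G^2)$. Theorem~\ref{thm:slowrate} then gives, after a union bound over the $\lceil\log_2 T\rceil$ sessions and uniformly in $\theta\in\cB_1$,
\[
L_i\,\bar R_i(\theta)\;\lesssim\;\frac{G^2\log d}{\mu}\;+\;G\sqrt{L_i\log d}\,D(\theta,\Theta^{(i)})\;+\;G\log d,
\]
up to $\log\log$-type factors, where $\bar R_i$ is the average regret over session $i$. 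Summing over $i$ and dividing by $T$ reduces the whole argument to bounding $\sum_i\sqrt{L_i}\,D(\theta,\Theta^{(i)})$ in three different ways, one for each term of the minimum.

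For the slow rate, use that each $\Theta^{(i)}$ contains the scaled canonical corners $\pm 2 e_k$. A direct calculation from Definition~\ref{def:distance} shows $D(\theta, 2 e_k)\leq 1$ for every $\theta\in\cB_1$, and $\sum_i\sqrt{L_i}=\cO(\sqrt{T})$ yields the first term $G\sqrt{\log d/T}$ of the minimum, together with the baseline $\cO(G\sqrt{\log d/t})$-regret against any fixed comparator in $\cB_1$ that will be recycled below.

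For the medium rate, exploit that $[\theta^*_i]_{\|\theta\|_0}\in\Theta^{(i)}$. Since $\theta$ and $[\theta^*_i]_s$ are both $s$-sparse with $s=\|\theta\|_0$, their difference is $2s$-sparse, giving
\[
D(\theta,[\theta^*_i]_s)\;\lesssim\;\|\theta-[\theta^*_i]_s\|_1\;\leq\;\sqrt{2s}\,\|\theta-[\theta^*_i]_s\|_2\;\leq\;2\sqrt{2s}\,\|\theta-\theta^*_i\|_2,
\]
where the last inequality is the standard best-$s$-term estimate. Strong convexity of $f_{i-1} := \tfrac{1}{t_i-1}\sum_{t<t_i}\ell_t$ at its minimizer $\theta^*_i$ gives
\[
\|\theta-\theta^*_i\|_2^2\;\leq\;\frac{2}{\mu}\bigl(f_{i-1}(\theta)-f_{i-1}(\theta^*_i)\bigr)\;=\;\frac{2}{\mu}\bigl(\bar R_{i-1}(\theta^*_i)-\bar R_{i-1}(\theta)\bigr),
\]
and the baseline slow rate on the anchor $\theta^*_i\in\cB_1$ gives $\bar R_{i-1}(\theta^*_i)\lesssim G\sqrt{\log d/t_i}$. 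The weighted Cauchy–Schwarz
\[
\sum_i\sqrt{L_i}\,\|\theta-\theta^*_i\|_2\;\leq\;\Bigl(\sum_i\frac{L_i}{\sqrt{t_i}}\Bigr)^{1/2}\Bigl(\sum_i\sqrt{t_i}\,\|\theta-\theta^*_i\|_2^2\Bigr)^{1/2}
\]
bounds the first factor by $\cO(T^{1/4})$ (geometric series) and the second factor by $\cO(\sqrt{G\sqrt{\log d}\log T/\mu})$, producing the second term $\sqrt{\|\theta\|_0/\mu}(G\sqrt{\log d/T})^{3/2}$. The third term $\sqrt{\|\theta\|_0 d}\,G^2\log d/(\mu T)$ is then obtained by bootstrapping: replace the slow-rate estimate of $\bar R_{i-1}(\theta^*_i)$ by the medium rate just proved, tightening $\|\theta-\theta^*_i\|_2$ by a factor of $T^{-1/4}$, and, in the regime where the leader is essentially full-dimensional, substitute the dense bound $\|v\|_1\leq\sqrt{d}\|v\|_2$ for the $\sqrt{s}$-sparse one.

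The main obstacle is to control the sign of $\bar R_{i-1}(\theta)$ in the strong-convexity identity: when the algorithm has historically beaten $\theta$, the quantity $f_{i-1}(\theta)-f_{i-1}(\theta^*_i)=\bar R_{i-1}(\theta^*_i)-\bar R_{i-1}(\theta)$ is not bounded by $\bar R_{i-1}(\theta^*_i)$ alone, and also the pseudo-metric upper bound $D(\theta,\theta')\lesssim \|\theta-\theta'\|_1/(1-\|\theta'\|_1)$ degrades when $[\theta^*_i]_s$ sits near the boundary of $\cB_1$. The fix is to move the negative contributions of $-\bar R_{i-1}(\theta)$ to the left-hand side and apply Cauchy–Schwarz globally across the horizon, so that the resulting inequality in $R_T(\theta)$ can be solved in closed form, and to handle the boundary by first mapping $\theta$ into a slightly shrunken $\ell_1$-ball at the price of a Lipschitz term absorbed in the $G\log d/T$ remainder.
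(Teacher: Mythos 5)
Your architecture is essentially the paper's: apply Theorem~\ref{thm:slowrate} session by session, control $D(\theta,[\theta^*_i]_{\|\theta\|_0})$ via the best-$s$-term estimate $\|\theta-[\theta^*_i]_s\|_1\le\sqrt{2s}\,\|\cdot\|_2\le 2\sqrt{2s}\,\|\theta-\theta^*_i\|_2$, and convert $\|\theta-\theta^*_i\|_2^2$ into a regret difference by strong convexity at the leader. However, three steps of your plan do not go through as stated. First, the sign issue you flag is real but your fix is not workable as described: the troublesome quantity $-\Reg_{i-1}(\theta)$ sits \emph{inside} a square root inside a sum, so ``moving it to the left-hand side'' and closing a single global inequality in $R_T(\theta)$ is not available. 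The paper instead runs an explicit induction over sessions, with the recursion $\Reg_{j+1}\le\Reg_j+x_1\sqrt{x_2-\Reg_j}+x_3$ resolved by the case analysis $\Reg_{j+1}\le\max\{3x_1^2/4,\ (\Reg_j)_+ + x_1\sqrt{x_2}\}+x_3$ (maximizing the right-hand side over $\Reg_j\le 0$ in the bad case); this per-session argument is the actual mechanism and you need some version of it. Second, the $\sqrt{\|\theta\|_0 d}/T$ rate cannot be obtained by a single bootstrap that substitutes the $T^{-3/4}$ rate for the slow rate on $\bar R_{i-1}(\theta^*_i)$: that substitution improves $\|\theta-\theta^*_i\|_2$ from $t_i^{-1/4}$ only to $t_i^{-3/8}$ (not by $T^{-1/4}$ as you claim) and yields $T^{-7/8}$, not $T^{-1}$. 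The paper's Part~3 is a genuine fixed-point induction: the target bound $\Reg_j(\vartheta)\le j a c_0\sqrt{\|\vartheta\|_0 d}\,G^2/\mu$ is itself the induction hypothesis, applied to the possibly dense leader $\theta^*_j$ (whence the factor $\sqrt{d}$, with $\sqrt{d_0}$ still coming from the sparsity of the comparator in the $D$-bound, not from a swap of $\sqrt{s}$ for $\sqrt{d}$ as you suggest).

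Third, the boundary handling by shrinking $\theta$ to $(1-\delta)\theta$ fails quantitatively: to absorb the Lipschitz cost $G\delta$ into a $G\log d/T$ remainder you need $\delta\lesssim\log d/T$, but then the factor $1/(1-\|\theta\|_1)$ in Lemma~\ref{lem:rlarge} becomes of order $T/\log d$ and destroys the accelerated rates. The paper avoids this by proving the result for $\theta\in\cB_{1/2}$ against a grid whose corners have $\ell_1$-norm $2$ (which is why strong convexity is assumed on $\cB_2$) and then rescaling the \emph{losses}, $\ell_t(2\theta)$, rather than the comparator. These are fixable but non-cosmetic gaps; as written, the medium-rate term is only partially justified and the fast-rate term is not reached.
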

The proof is deferred to the appendix. It is worth to notice that the bound can be rewritten as follows:
\[
    R_T(\theta) \leq \tilde \cO \left( 
  \min\left\{G \sqrt{\frac{\log d}{T}}, \frac{\|\theta\|_0 G^2 \log d  }{\mu T}\right\} \min\left\{G \sqrt{\frac{\log d}{T}}, \frac{d G^2 \log d  }{\mu T}\right\} \right)^{1/2} \,.
\]
It provides an intermediate rate between known optimal rates without sparsity 
$\cO(\sqrt{\log d/T})$ and $\smash{\tilde \cO(d/T)}$ and known optimal rates with sparsity $\smash{\cO(\sqrt{\log d/T})}$ and $\smash{\tilde \cO(\|\theta\|_0/T)}$ but with non-efficient procedures only. If all $\theta^*_i$ are approximatively $d_0$-sparse it is possible to achieve a rate of order $\smash{\tilde \cO(d_0/ T)}$, for any $\|\theta\|_0\le d_0 $. This can be achieved in particular in the i.i.d. setting (see next section). However, we leave for future work whether it is possible to achieve it in full generality and efficiently in the adversarial setting.

\begin{remark} The strongly convex assumption on the losses can be relaxed by only assuming Inequality~\eqref{eq:relaxedstrongconvexity}: it exists  $\mu > 0$ and $\beta \in [0,1]$ such that for all $t\geq 1$ and $\theta \in \cB_{1}$ 
\begin{equation}\label{eq:assadv}
  \textstyle{ \mu \big\|\theta - \theta^*_{t}\big\|_2^2  \leq  \Big( \frac{1}{t} \sum_{s=1}^{t} \ell_s(\theta) - \ell_s(\theta^*_t)\Big)^\beta}, \quad \text{where} \quad  \theta^*_t \in  \argmin_{\theta \in \cB_{1}} \sum_{s=1}^{t} \ell_s(\theta)\,.
\end{equation}
 The rates will then depend on $\beta$ as it was the case in Theorem~\ref{thm:BOA_fastrate}.  A specific interesting case is when  $\|\theta_t^*\|_1=1$. Then $\theta_t^*$ is very likely to be sparse. Denote $S_t^*$ its support. Assumption \eqref{eq:assadv} can be weakened in this case. Indeed any $\theta\in \cB_1$ satisfies $\|\theta\|_1\le \|\theta_t^*\|_1$, which from Lemma 6 of \cite{AgarwalNegahbanWainwright2012} yields $\|\theta-\theta_t^*\|_1\le 2\|[\theta-\theta_t^*]_{S^*_t}\|_1 $ where $[\theta]_{S}=(\theta_i\1_{i\in S})_{1\le i\le d}$. One can thus restrict Assumption \eqref{eq:assadv} to hold on the support of $\theta^*_t$ only.  Such restricted conditions for $\beta=1$ are common in the sparse learning literature and essentially necessary to hold for the existence of efficient and optimal sparse procedures, see \cite{Zhang2014}. In the online setting, the restricted condition \eqref{eq:assadv} with $\beta=1$ should hold at any time $t\ge1$, which is unlikely. 
\end{remark}

\subsection{Fast-rate sparsity risk bound under i.i.d. data}
\label{sec:acc}

In this section, we provide an algorithm with fast-rate sparsity risk-bound on $\cB_1$ under i.i.d. data. This is obtained by regularly restarting Algorithm~\ref{alg:BOA} with an updated discretization grid $\Theta_0$ approaching the set of minimizers 
$
 \Theta^* := \arg\min_{ \theta \in \cB_1}\E[\ell_t(\theta)]
$.

In this setting, a close inspection of the proof of Theorem~\ref{thm:acceleratedBOA} shows that we can replace Assumption~\ref{ass:a2} with the Bernstein condition: it exists $\alpha'>0$ and $\beta \in [0,1]$, such that for all $\theta \in \cB_1$, all $\theta^* \in \Theta^*$ and all $t\geq 1$, 
  \begin{equation}
    \label{eq:Bernstein}\tag{A2'}
    \qquad \alpha' \E \Big[ \big(\nabla \ell_t(\theta)^\top(\theta - \theta^{*})\big)^{2}  \Big] \leq \E\Big[ \nabla \ell_t(\theta)^\top (\theta - \theta^*)\Big]^{\beta}\,.
  \end{equation}
This fast-rate type stochastic condition is equivalent to the \emph{central condition} (see \cite[Condition 5.2]{van2015fast}) and was already considered to obtain faster rates of convergence for the regret (see \cite[Condition~1]{koolen2016combining}).

\paragraph{The Łojasiewicz's assumption}

In order to obtain sparse oracle inequalities we also need the Łojasiewicz's Assumption~\ref{ass:Lojasiewicz} which is a relaxed version of strong convexity of the risk.
\begin{enumerate}[label={(A\arabic*)},topsep=0pt]
  \setcounter{enumi}{\value{saveenum}}
  \item\label{ass:Lojasiewicz} \emph{Łojasiewicz's inequality:} $(\ell_t)_{t\geq 1}$ is i.i.d. and it exists $\beta \in [0,1]$ and $0< \mu\le 1$ such that, for all $\theta \in \R^d$ with $\|\theta\|_1\leq 1$ , it exists $\theta^* \in \Theta^* \subseteq \cB_1$ satisfying
  \[
    \mu  \big\| \theta -  \theta^\ast\big\|_2^2 \leq \E[\ell_t( \theta) - \ell_t( {\theta^\ast})]^\beta\,.
  \]
    \setcounter{saveenum}{\value{enumi}}
\end{enumerate}

This assumption is fairly mild. It is indeed satisfied with $\beta = 0$  and $\mu = 1$ as soon as the loss is convex. For $ \beta = 1$, this assumption is implied by the strong convexity of the risk $\E[\ell_t]$.   
One should mention that our framework is more general than this classical case because \vspace{-5pt}
\begin{itemize}[label={-},itemsep=0pt,parsep=0pt,topsep=0pt,leftmargin=15pt]
\item multiple optima are allowed, which seems to be new when combined with sparsity bounds;
 \item on the contrary to~\cite{Steinhardt2014} or~\cite{GaillardWintenberger2017}, our framework does not compete with the minimizer $\theta^*$ over $\R^d$ with a known upper-bound on the $\ell_1$-norm $\|\theta^*\|_1$. We consider the minimizer over the $\ell_1$-ball $\cB_1$ only. The latter is more likely to be sparse and Assumption~\ref{ass:Lojasiewicz} only needs to hold over $\cB_1$. 
\end{itemize}

Assumption~\ref{ass:a2} (or~\eqref{eq:Bernstein}) and~\ref{ass:Lojasiewicz} are strongly related. Assumption~\ref{ass:Lojasiewicz} is more restrictive because it is heavily design dependent. In linear regression for instance, the constant $\mu$ corresponds to the smallest non-zero eigenvalue of the Gram matrix while $\alpha = 1/G^2$.
If $\Theta^* = \{\theta^*\}$ is a singleton than Assumption~\ref{ass:Lojasiewicz} implies Assumption~\eqref{eq:Bernstein} with $\alpha' \geq \mu/G^2$. 

\paragraph{Algorithm and risk bound}

 Our new procedure is described in Algorithm~\ref{alg:SABOA}. It is based on the following fact: the bound of Theorem~\ref{thm:slowrate} is small if one of the estimators in $\Theta_0$ is close to $\Theta^\ast$. Thus, our algorithm regularly restarts BOA by adding current estimators of $\Theta^*$ into an updated grid $\Theta_0$. The estimators are built by averaging past iterates $\smash{\hat \theta_{t-1}}$ and truncated to be sparse and ensure small $\ell_1$-distance.  Remark that restart schemes under Łojasiewicz's Assumption is natural and was already used for instance in optimization by~\cite{roulet2017sharpness}.  
 A stochastic version of the algorithm (sampling randomly a subset of gradient coordinates at each time step) can be implemented as in the experiments of \cite{DuchiEtAl2008}.  We get the following upper-bound on the average risk. The proof, that computes the exact constants, is postponed to Appendix~\ref{app:proof_SABOA}.

 \begin{algorithm}[!t]
    \caption{SABOA -- Sparse Acceleration of BOA}
    \label{alg:SABOA}

    {\bfseries Parameters:} $E >0$ \\
    {\bfseries Initialization:} $t_i = 2^{i}$ for $i\geq 0$, \\
    For each  $i = 0,\dots$
    \begin{itemize}[itemsep=1pt,parsep=1pt,topsep=1pt]
      \item define $\smash{\bar  \theta^{(i-1)} := 0}$ if $i=0$ and $\smash{\bar  \theta^{(i-1)} :=  2^{-i+1} \sum_{t=t_{i-1}}^{t_i - 1} \hat  \theta_{t-1}}$ otherwise.
      \item  Define $\Theta^{(i)}$ a set of hard-truncated and dilated soft-thresholded versions of $\bar \theta^{(i-1)}$ as in~\eqref{eq:defgrid};
			\item Denote $\smash{K_i := \mathrm{Card}(\Theta^{(i)}) + 2d \leq (i+1)(1+\log d)+3d}$\,;
      \item At time step $t_i$, restart Algorithm~\ref{alg:BOA} in $\Delta_{K_i}$ with parameters $\Theta_0 := \Theta^{(i)} \cup \{\theta:\|\theta\|_1=1,\|\theta\|_0=1\}$ (denote by $\theta_1,\dots,\theta_{K_i}$ its elements), $E>0$ and uniform prior $\hat \pi_0$ over $\Delta_{K_i}$. In other words, for time steps $t= t_i,\dots,t_{i+1}-1$:
        \begin{itemize}[nosep,itemsep=0pt,parsep=0pt,topsep=0pt]
          \item predict $\smash{\hat  \theta_{t-1} = \sum_{k=1}^{K_i} \hat \pi_{k,t-1} \theta_k}$ and observe $\smash{\nabla \ell_t(\hat \theta_{t-1})}$
          \item define component-wise for all $1\leq k \leq K_i$
          \[{}
              \hat \pi_{k,t} = \frac{\sum_{i=1}^{\log(ET^2)}\eta_{k,i}e^{\eta_{k,i} \sum_{s=1}^{t} (r_{k,s} - \eta_{k,i} r_{k,s}^2) }\pi_{k,0}}{\sum_{i=1}^{\log(ET^2)}\E_{\pi_0}\big[\eta_{j,i}e^{\eta_{j,i} \sum_{s=1}^{t} (r_{j,s} - \eta_{j,i} r_{j,s}^2) }\big]} \,,
        \]
        where $r_{k,s} = \nabla \ell_t(\hat\theta_{s-1})^\top(\hat\theta_{s-1}-\theta_k) $.
        \end{itemize}
    \end{itemize}
\end{algorithm}

    \begin{theorem}
        \label{thm:acceleratedBOA}
        Let $x >0$, $\gamma \geq 0$. Under Assumptions~(A1-3), if $\Theta^* \subseteq \cB_{1-\gamma}$, $E = 4/3G \geq 1$, Algorithm~\ref{alg:SABOA} satisfies with probability at least $1-e^{-x}$ the bound on the average risk
        \[
        R_T( \theta^\ast)  \lesssim  \left(\frac{\log d+\log \log (GT) + x}{T}  \left(\frac{1}{\alpha}+ \frac{G^2}{\mu}\Big(d_0^2 \wedge \frac{d_0}{\gamma^2}\Big) \right)\right)^{\frac{1}{2-\beta}} \,,
        \]
      where $d_0 = \max_{\theta^*\in\Theta^*}\|\theta^*\|_0$. 
    \end{theorem}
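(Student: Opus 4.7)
}

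The plan is to run an induction over the doubling sessions. Write $\epsilon_i := (1/2^i)\sum_{t=t_i}^{t_{i+1}-1}\E_{t-1}[\ell_t(\hat\theta_{t-1})-\ell_t(\theta^*)]$ for the average per-session regret against a fixed $\theta^*\in\Theta^*$, and let $a := \log d+\log\log(GT)+x+\log\log T$ absorb all logarithmic factors (the last term handles a union bound over the $\log_2 T$ sessions). Since Algorithm~\ref{alg:SABOA} simply restarts Algorithm~\ref{alg:BOA} on session $i$ with the grid $\Theta^{(i)}$ of size $K_i\lesssim d$, Theorem~\ref{thm:slowrate} yields with high probability
\[
    \epsilon_i\ \lesssim\ \left(\frac{a}{\alpha\,2^i}\right)^{\!\frac{1}{2-\beta}}\ +\ G\,D\!\left(\theta^*,\Theta^{(i)}\right)\sqrt{\frac{a}{2^i}}\ +\ \frac{aG}{2^i}\,,
\]
so everything boils down to controlling $D(\theta^*,\Theta^{(i)})$ by the performance of the previous session.

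The key bridge is the averaged iterate $\bar\theta^{(i-1)}$. By Jensen's inequality applied to the convex loss, $\E[\ell_t(\bar\theta^{(i-1)})-\ell_t(\theta^*)]\le \epsilon_{i-1}$, so Łojasiewicz's Assumption~\ref{ass:Lojasiewicz} delivers a $\theta^*\in\Theta^*$ with $\|\bar\theta^{(i-1)}-\theta^*\|_2^2\le \epsilon_{i-1}^\beta/\mu$. I then convert this $\ell_2$-closeness into smallness of $D(\theta^*,\cdot)$ through the two sparse approximations packed into $\Theta^{(i)}$. For the hard-truncation $[\bar\theta^{(i-1)}]_{d_0}$, the difference with the $d_0$-sparse $\theta^*$ lives on at most $2d_0$ coordinates, hence $\|[\bar\theta^{(i-1)}]_{d_0}-\theta^*\|_1\le 2\sqrt{2d_0}\,\|\bar\theta^{(i-1)}-\theta^*\|_2$ (using that $[\cdot]_{d_0}$ is the best $d_0$-sparse $\ell_2$-approximation). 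Since $\theta^*\in\cB_{1-\gamma}$ and the truncation only shrinks $\ell_1$-norm, Lemma~\ref{lem:rlarge} gives $D(\theta^*,[\bar\theta^{(i-1)}]_{d_0})\lesssim \sqrt{d_0/\gamma^2}\cdot\sqrt{\epsilon_{i-1}^\beta/\mu}$, which is the $d_0/\gamma^2$ branch. The complementary $d_0^2$ branch is obtained from the dilated soft-thresholded versions: soft-thresholding at a level $\lambda\asymp\|\bar\theta^{(i-1)}-\theta^*\|_2/\sqrt{d_0}$ removes the off-support mass, after which dilation to $\ell_1$-norm equal to $\|\theta^*\|_1$ gives a sparse $\theta'$ with $\|\theta'-\theta^*\|_1\lesssim d_0\|\bar\theta^{(i-1)}-\theta^*\|_2$; then choosing $\pi\asymp d_0\|\bar\theta^{(i-1)}-\theta^*\|_2$ directly in the definition of $D$ (rather than going through Lemma~\ref{lem:rlarge}) gives the $\gamma$-free bound $D\lesssim d_0\,\sqrt{\epsilon_{i-1}^\beta/\mu}$.

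Plugging whichever bound is smaller into the per-session estimate produces the recursion
\[
    \epsilon_i\ \lesssim\ \left(\frac{a}{\alpha\,2^i}\right)^{\!\frac{1}{2-\beta}}\ +\ G\sqrt{\frac{aC_0}{\mu\,2^i}}\ \epsilon_{i-1}^{\beta/2}\ +\ \frac{aG}{2^i}\,,\qquad C_0:=d_0^2\wedge\frac{d_0}{\gamma^2}\,.
\]
I claim the ansatz $\epsilon_i\le \kappa\,2^{-i/(2-\beta)}$ with $\kappa\asymp\bigl(a(\alpha^{-1}+G^2 C_0/\mu)\bigr)^{1/(2-\beta)}$ is stable: substituting it into the right-hand side, the middle term becomes $G\sqrt{aC_0/\mu}\,\kappa^{\beta/2}\,2^{-i/2}\cdot 2^{\beta/(2(2-\beta))}$, and since $(2-\beta)^{-1}\cdot(2-\beta)/2 + \beta/(2(2-\beta))=1/2+\beta/(2(2-\beta))=1/(2-\beta)$, the exponent in $2^{-i}$ matches, and the prefactor matches provided $\kappa^{1-\beta/2}\ge cG\sqrt{aC_0/\mu}$, which is exactly the definition of $\kappa$. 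The induction goes through because session $i$ only needs $\epsilon_{i-1}$ to satisfy the ansatz, with a union bound over the $O(\log T)$ sessions absorbed into $a$. Summing the per-session regrets $\sum_{i\le \log_2 T} 2^i \epsilon_i$ and dividing by $T$ produces the geometric series dominated by the last session, giving $R_T(\theta^*)\lesssim \kappa/T^{1/(2-\beta)}$, which is the claimed bound.

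The main obstacle is the simultaneous treatment of both branches of $C_0 = d_0^2\wedge d_0/\gamma^2$: when $\gamma$ is tiny the Lemma~\ref{lem:rlarge} route blows up, so the proof must use the dilated soft-thresholded vectors and bound $D$ directly from its definition (verifying that $(1-\pi)\theta'$ recovers $\theta^*$ up to $\ell_1$-error at most $\pi$). Carefully choosing the soft-threshold level, checking that the required dilated vector is indeed in $\Theta^{(i)}$ up to the discretization built into the grid, and ensuring that these extra elements cost only $O(\log d)$ in $\log K_i$ (already reflected in $K_i\le (i+1)(1+\log d)+3d$) are the technical points to grind through in Appendix~\ref{app:proof_SABOA}.
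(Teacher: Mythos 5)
Your plan is correct and follows essentially the same route as the paper's proof in Appendix~\ref{app:proof_SABOA}: per-session application of Theorem~\ref{thm:slowrate}, Jensen plus Łojasiewicz to turn the averaged iterate's excess risk into $\ell_2$-closeness to some $\theta^*$, hard truncation with Lemma~\ref{lem:rlarge} for the $d_0/\gamma^2$ branch and dilated soft-thresholding (over a grid of unknown levels) for the $d_0^2$ branch, closed by a geometric recursion across the doubling sessions. The only cosmetic difference is that you induct on the per-session average regret $\epsilon_i\le\kappa 2^{-i/(2-\beta)}$ whereas the paper inducts on the grid distance $D(\theta^*,\Theta^{(i)})\le\epsilon 2^{-\tau i}$ with $\tau=\frac{1}{2-\beta}-\frac12$; these are the two equivalent faces of the same recursion.
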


    Let us conclude with some important remarks about Theorem~\ref{thm:acceleratedBOA}. First, it is worth pointing out that SABOA does not need to know the parameters $\delta$, $\beta$, $\alpha$, $\mu$ and $d_0$ to fulfill the rate of Theorem~\ref{thm:acceleratedBOA}.

    \emph{Approximately sparse optima}.  Our results can be extended to a unique approximately sparse optimum $\theta^*$. We get 
    $R_T(\theta) \leq (1+o(1))\|\theta - \theta^*\|_1 + \tilde \cO((\|\theta\|_0^2/T)^{1/(2-\beta)})$ for any $\theta \in \cB_1$; see~\cite{AgarwalNegahbanWainwright2012,bunea2007}. 

    \emph{On the radius of L1 ball}. We only performed the analysis into $\cB_1$, the $\ell_1$-ball of radius 1. However, one might need to compare with parameters into $\cB_1(U)$ the $\ell_1$-ball of radius $U>0$. This can be done by simply rescaling the losses and applying our results to the loss functions $\theta \in \cB_1 \mapsto \ell_t(U\theta)$ instead of $\ell_t$. 
    If $\theta^*$ lies on the border of the $\ell_1$-ball, we could not avoid a factor $\|\theta^*\|_0^2$. In that situation, our algorithm needs to recover the support of $\theta^*$ without the Irreprensatibility Condition~\citep{wainwright2009sharp} (see configuration 3 of Figure~\ref{fig:geom}). In this case, we can actually relax Assumption~\ref{ass:Lojasiewicz} to hold in sup-norm. 
    
\section*{Conclusion} In this paper, we show that BOA is an optimal online algorithm for aggregating predictors under very weak conditions on the loss. Then we aggregate sparse versions of the leader (BOA+) or of the averaging of BOA's weights (SABOA) in the adversarial or in the i.i.d. setting, respectively. Aggregating both achieves sparse fast-rates of convergence in any case. These rates are deteriorated compared with the optimal one that require restrictive assumption. Our  weaker conditions are very sensitive to the radius of the $\ell_1$-ball we consider. The optimal choice of the radius, if it is not imposed by the application, is left for future research.

\setcitestyle{square,numbers,sort&compress}
\bibliographystyle{abbrvnat}
\bibliography{biblio}

\cleardoublepage

\appendix

\section{Sparse oracle inequality by discretizing the space}
\label{app:discretization}

Inspired by the work of~\cite{RigolletTsybakov2011}, one can improve $d$ to $\|\theta\|_0 \log d$ in Proposition~\ref{prop:discretization} by carefully choosing the prior $\hat \pi_0$. To do so, we cover $\cB_1$ by the subspaces
\[
  \cB_1^\tau := \Big\{ \theta \in   \cB_1: \forall i \quad \tau_i = 0 \Rightarrow  \theta_i = 0 \Big\}\,,
\]
where $\tau \in \{0,1\}^d$ denotes a sparsity pattern which determines the non-zero components of $ \theta \in \cB_1^\tau$. For each sparsity pattern $\tau \in\{0,1\}^d$, the subspace $\cB_1^\tau$ can be approximated in $\ell_1$-norm by an $\epsilon$-cover $\smash{\cB_1^\tau(\epsilon)}$ of size $\smash{\epsilon^{-\|\tau\|_0}}$.
 In order to obtain the optimal rate of convergence, we apply Algorithm~\ref{alg:BOA} with $\smash{\Theta_0 = \cup_{\tau \in \{0,1\}^d} \cB_1^\tau(\epsilon)}$ with a non-uniform prior $\hat \pi_0$. The latter penalizes non-sparse   $\tau$ to reflect their respective complexities. We assign to any  $\theta \in \cB_1^\tau(\epsilon)$ the prior, depending on $\tau\in \{0,1\}^d$, 
\[
  \hat \pi_{\tau,0} = \left(\#\cB_1^\tau(\epsilon)(d+1) \binom{d}{d_0}\right)^{-1} \approx \frac{\epsilon^{d_0}}{(d+1) \binom{d}{d_0}} \qquad \text{where} \quad  d_0 = \| \tau\|_0\,.
\]
Note that the sum $\hat \pi_{\tau,0}$ over $\theta \in \cB_1^\tau(\epsilon)$ and $\tau \in \{0,1\}^d$ is one. Therefore, Theorem~\ref{thm:BOA_fastrate} yields
\begin{equation}
    R_T( \theta)
      \lesssim  \left(\frac{\|\theta\|_0 \log(d T/\|\theta\|_0) + x }{\alpha T}\right)^{\frac1{2-\beta}} + \frac{\|\theta\|_0 G }{T^2} \,,
    \label{eq:fastrate_discretized0}
\end{equation}
by noting that $\binom{d}{\|\theta\|_0} \leq d^{\|\theta\|_0}$ and choosing $\epsilon = \|\theta\|_0 /T^2$.  Similar optimal oracle inequalities for mixing arbitrary regressions functions are obtained by \citet{Yang2004} and \citet{Catoni1999}.

\section{Properties of the averaging accelerability}
\label{app:pseudometric}
In this appendix, we give a geometric interpretation of the \emph{averaging accelerability} defined in Definition~\eqref{def:distance}. We also provide  several properties in terms of classical distances.

\paragraph{Geometric insight} Let $\theta \in \cB_1$ be some unknown parameter and $\theta'\in \cB_1$ a point approximating $\theta$.
Let us define $\theta'' \in \cB_1$ the unique point satisfying 
\begin{equation}
  \label{eq:extrapolation}
  \|\theta'' \|_1=1 \qquad \text{and} \qquad \theta''=\lambda(\theta - \theta') + \theta'
\end{equation}
for some $\lambda \ge 1$. From this definition, we immediately derive that 
\[
  \left\|\theta - \Big(1-\frac{1}{\lambda}\Big)\theta'\right\|_1 = \frac{\|\theta''\|_1}{\lambda} = \frac{1}{\lambda}
\]
Therefore from Definition~\ref{def:distance}, we have $D(\theta,\theta')\leq \frac{1}{\lambda}$. Actually, this is an equality and we can write
\[
 D(\theta,\theta') = \max \Big\{\lambda \ge 1:   \|\lambda(\theta - \theta') + \theta '\|_1 \leq 1 \Big\}^{-1}\,.
\]
As the maximum is achieved, the averaging accelerability corresponds to the inverse of $\lambda$ in the definition \eqref{eq:extrapolation} of the extrapolation point $\theta''$.

\begin{figure}[h!]
   \begin{tikzpicture}[scale=2.2]
    \tikzset{cross/.style={cross out, draw=black, minimum size=2*(#1-\pgflinewidth), inner sep=0pt, outer sep=0pt},
cross/.default={2pt}}
        
\draw[fill=green,draw=black,fill opacity=.3] (-1,0) --(0,1) -- (1,0) ;
\draw[fill] (.2,.7) circle [radius=0.025];
\node [below] at (.2,.7) {$\theta$};
\draw[fill] (0.05,.65) circle [radius=0.025];
\node [below] at (0.05,.65) {$\theta'$};
\draw[fill] (0.275,.725) circle [radius=0.025];
\node [above] at (0.275,.725) {$\theta''$};
\draw[dotted,thick] (0.05,.65) --(0.275,.725);

\draw[fill=red,draw=black,fill opacity=.3] (1,0) --(2,1) -- (3,0) ;
\draw[fill] (2.05,.65) circle [radius=0.025];
\node [below] at (2.05,.65) {$\theta'$};
\draw[fill] (2.275,.725) circle [radius=0.025];
\node [above] at (2.5,.725) {$\theta=\theta''$};

\draw[fill=green,draw=black,fill opacity=.3] (3,0) --(4,1) -- (5,0) ;
\draw[fill] (4.05,0.95) circle [radius=0.025];
\node [below] at (4.05,.95) {$\theta'$};
\draw[fill] (4.275,.725) circle [radius=0.025];
\node [above] at (4.275,.725) {$\theta$};
\draw[fill] (5,0) circle [radius=0.025];
\node [above] at (5,0) {$\theta''$};

    \end{tikzpicture}
   \caption{Averaging accelerability for 3 different configurations.}\label{fig:geom}
\end{figure}

Figure \ref{fig:geom} pictures several configurations of $\theta'$ and $\theta$ that lead to different averaging accelerability. The further $\theta''$ is from $\theta$, the smaller is $D(\theta,\theta')$ and the smaller is the averaging accelerability. 
When $D(\theta,\theta') = 1/\lambda = 1$, then $\theta=\theta''$ and our regret bound does not improve the classic slow-rate $\cO(1/\sqrt{T})$. That typically happens when $\|\theta\|_1=1$, as in the second configuration in Figure~\ref{fig:geom}. In this case, a possible solution is to consider a larger ball (for instance of radius 2 instead of 1). This approach was considered in \cite{GaillardWintenberger2017}, see Figure~\ref{fig:geom} there. 
Another solution is to remark that even when $\|\theta\|_1=1$, the procedure is still accelerable ($D(\theta,\theta')<1$) if the approximation $\theta'$ satisfies the same constraints than $\theta$ (see the third configuration in Figure~\ref{fig:geom} where $\theta''$ and $\theta$ are on the same edge of the ball). We make this statement more precise in the following subsections.

\subsection{The averaging accelerability in terms of classical distances}
We provide in the next Lemmas a few  concrete upper-bounds in terms of classical distances. The proofs are respectively postponed to Appendices~\ref{app:rlarge} to \ref{app:softthreshold}. The first Lemma, states that the averaging accelerability  can be upper-bounded with the $\ell_1$-distance. 

\begin{lemma}
  \label{lem:rlarge}
  We have for any $\theta,\theta' \in \cB_1$
    $$
        D(\theta,\theta') \leq \frac{\|\theta - \theta'\|_1}{\|\theta - \theta'\|_1 + 1 -\|\theta\|_1}  \,.
    $$
\end{lemma}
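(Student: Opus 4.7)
The strategy is to exhibit an explicit admissible value of $\pi$ in the definition of $D(\theta,\theta')$ and then invoke the min. The key algebraic identity is the convex-combination rewriting
\[
  \theta - (1-\pi)\theta' \;=\; (1-\pi)(\theta-\theta') + \pi\,\theta,
\]
which holds for every $\pi \in [0,1]$. By the triangle inequality in $\ell_1$-norm,
\[
  \big\|\theta-(1-\pi)\theta'\big\|_1 \;\leq\; (1-\pi)\,\|\theta-\theta'\|_1 + \pi\,\|\theta\|_1.
\]
Imposing that the right-hand side is at most $\pi$ is equivalent to the linear inequality
\[
  (1-\pi)\,\|\theta-\theta'\|_1 \;\leq\; \pi\,(1 - \|\theta\|_1),
\]
which, after rearrangement, holds as soon as
\[
  \pi \;\geq\; \pi^\star \;:=\; \frac{\|\theta-\theta'\|_1}{\|\theta-\theta'\|_1 + 1 - \|\theta\|_1}.
\]

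The next step is a sanity check that $\pi^\star$ is a valid candidate in the definition of $D$. Since $\theta\in\cB_1$, one has $1-\|\theta\|_1\geq 0$, hence $\pi^\star \in [0,1]$ (with the convention $0/0=0$ in the degenerate case $\theta=\theta'$ with $\|\theta\|_1=1$, which is treated directly since then $D(\theta,\theta')=0$). Plugging $\pi=\pi^\star$ into the triangle-inequality bound above yields $\|\theta-(1-\pi^\star)\theta'\|_1 \leq \pi^\star$, so by the very definition
\[
  D(\theta,\theta') \;=\; \min\!\big\{0\leq\pi\leq 1 : \|\theta-(1-\pi)\theta'\|_1 \leq \pi\big\} \;\leq\; \pi^\star,
\]
which is exactly the claimed bound.

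The proof is really a one-line computation; there is no genuine obstacle. The only mildly delicate point is the boundary case $\|\theta\|_1=1$, where the denominator reduces to $\|\theta-\theta'\|_1$ and $\pi^\star=1$: one then directly checks that $\pi=1$ is admissible since $\|\theta - 0\cdot\theta'\|_1 = \|\theta\|_1 = 1$, so the bound is trivially valid and consistent with the fact that the procedure is not accelerable on the boundary (cf.\ the middle configuration in Figure~\ref{fig:geom}).
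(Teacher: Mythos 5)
Your proof is correct and follows essentially the same route as the paper: both exhibit $\pi^\star = \|\theta-\theta'\|_1/(\|\theta-\theta'\|_1+1-\|\theta\|_1)$ as an admissible value via the rewriting $\theta-(1-\pi)\theta'=(1-\pi)(\theta-\theta')+\pi\theta$ and the triangle inequality, then conclude from the definition of $D$ as a minimum. Your additional discussion of the degenerate and boundary cases is a harmless (and slightly more careful) elaboration of the same argument.
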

The Lemma above has a main drawback. The averaging accelerability  does not decrease with the $\ell_1$-distance if $\|\theta\|_1 = 1$. 
In this case, we thus need additional assumptions. The following Corollary upper-bounds the averaging accelerability in sup-norm as soon as a $\theta'$ has a support included into the one of $\theta$. This situation is represented in the third configuration of Figure~\ref{fig:geom}.

\begin{lemma}
  \label{lem:supportmin}
  Let $\theta,\theta' \in \cB_1$. Assume that $\|\theta'\|_1\geq \|\theta\|_1$ and $\sign(\theta_i') \in \{0,\sign(\theta_i)\}$ for all $1\leq i\leq d$.
  Then,
  \[
    D(\theta,\theta') \leq 1- \min_{1\leq i\leq d} \frac{|\theta_i|}{|\theta'_i|}  \leq  \frac{\|\theta - \theta'\|_\infty}{\Delta} \,,
  \]
  where $\Delta := \min_{i:\theta'_i \neq 0} |\theta_i|$.
\end{lemma}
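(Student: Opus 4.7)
The plan is to produce an explicit admissible $\pi$ in Definition~\ref{def:distance} by setting $m := \min_{i:\theta'_i \neq 0} |\theta_i|/|\theta'_i|$ and taking $\pi := 1-m$. Before doing so, I would extract two consequences of the sign hypothesis $\sign(\theta'_i)\in\{0,\sign(\theta_i)\}$: first, $\theta_i = 0$ forces $\sign(\theta'_i)=0$, hence $\Supp(\theta')\subseteq \Supp(\theta)$; second, on $\Supp(\theta')$ the coordinates $\theta_i$ and $\theta'_i$ share a common sign, and by definition of $m$ the rescaled coordinate $m\theta'_i$ has the same sign as $\theta_i$ with $|m\theta'_i|\leq|\theta_i|$. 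The degenerate case $\theta'=0$ is handled separately: combined with $\|\theta'\|_1\geq \|\theta\|_1$ it forces $\theta=0$, so $D(\theta,\theta')=0$ and both bounds are trivial.

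To prove the first inequality I would compute coordinate by coordinate,
\[
  \|\theta - m\theta'\|_1 = \sum_{i\notin \Supp(\theta')} |\theta_i| + \sum_{i\in \Supp(\theta')}\bigl(|\theta_i| - m|\theta'_i|\bigr) = \|\theta\|_1 - m\|\theta'\|_1.
\]
The key sub-step is that $m \leq 1$: since $\|\theta'\|_1 \geq \|\theta\|_1 = \sum_{i\in \Supp(\theta')}|\theta_i|$, at least one coordinate $i^{\star}\in \Supp(\theta')$ must satisfy $|\theta_{i^{\star}}|\leq |\theta'_{i^{\star}}|$, hence $m\leq 1$. Using $m\leq 1$ together with $\|\theta\|_1, \|\theta'\|_1 \leq 1$ one obtains $\|\theta - m\theta'\|_1 \leq 1 - m$, so $\pi = 1-m$ is admissible in Definition~\ref{def:distance}, giving $D(\theta,\theta')\leq 1-m$.

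For the second inequality I would let $i^{\star}$ attain the minimum defining $m$ and chain
\[
  1 - m = \frac{|\theta'_{i^{\star}}| - |\theta_{i^{\star}}|}{|\theta'_{i^{\star}}|} \leq \frac{|\theta'_{i^{\star}} - \theta_{i^{\star}}|}{|\theta_{i^{\star}}|} \leq \frac{\|\theta - \theta'\|_\infty}{\Delta},
\]
using $|\theta'_{i^{\star}}|\geq |\theta_{i^{\star}}|>0$ in the denominator (positivity via $i^{\star}\in \Supp(\theta')\subseteq \Supp(\theta)$), the reverse triangle inequality $\bigl||\theta'_{i^{\star}}| - |\theta_{i^{\star}}|\bigr|\leq |\theta'_{i^{\star}}-\theta_{i^{\star}}|$ (the absolute value can be dropped since $m\leq 1$ makes the left-hand side non-negative), and $|\theta_{i^{\star}}|\geq \Delta$ by definition of $\Delta$. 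The main obstacle is the coordinate bookkeeping: guaranteeing that the sign structure survives the scaling by $m$ so that the $\ell_1$ norm decomposes cleanly as $\|\theta\|_1 - m\|\theta'\|_1$, and confirming $m\leq 1$ rigorously---both hinge on the interplay between the hypothesis $\|\theta'\|_1\geq \|\theta\|_1$ and the sign assumption, which together must be used in just the right way.
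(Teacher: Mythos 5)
Your proof is correct and follows essentially the same route as the paper's: take $\pi = 1-m$ with $m$ the minimal ratio $|\theta_i|/|\theta'_i|$, use the sign hypothesis to get the exact decomposition $\|\theta-m\theta'\|_1=\|\theta\|_1-m\|\theta'\|_1\le(1-m)\|\theta\|_1\le 1-m$, and bound $1-m$ at the minimizing coordinate via $\|\theta-\theta'\|_\infty/\Delta$ (you are in fact slightly more careful than the paper, handling $\theta'=0$ and verifying $m\le 1$ explicitly, though note that $\|\theta\|_1=\sum_{i\in\Supp(\theta')}|\theta_i|$ should read $\ge$ since $\Supp(\theta')\subseteq\Supp(\theta)$ may be strict — the argument is unaffected).
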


We want to emphasis here the two very different behavior of the averaging accelerability; 
\begin{itemize}[label={--},itemsep=0pt,parsep=0pt,topsep=0pt]
\item in the case $\|\theta\|_1< 1$: the averaging accelerability is proportional to $\|\theta-\theta'\|_1$.
\item in the case $\|\theta\|_1=1$: the averaging accelerability may be smaller than 1 and lead to improved regret guarantees under extra assumptions: $\|\theta'\|_1=1$ and the support of $\theta'$ is included in the one of $\theta$. The relative gain is then proportional to $\|\theta\|_0\|\theta-\theta'\|_\infty$.
\end{itemize}

\subsection{The averaging accelerability with an approximation in sup-norm in hand}

Let us focus on the second case, where the averaging accelerability is controlled under the knowledge of the support of $\theta$. 
The second inequality in Lemma \ref{lem:supportmin} is interesting but yields an undesirable dependence on $\Delta := \min_{i:\theta_i \neq 0} |\theta_i|$, which can be arbitrarily small and which is at best of order $\|\theta\|_1/\|\theta\|_0$.  Moreover, the recovery of the support of $\theta$ is a well studied difficult problem, see \cite{wainwright2009sharp}. Thanks to the following Lemma, we ensure the averaging accelerability from any $\ell_{\infty}$-approximation $\theta'$ of $\theta$. We use a dilated soft-thresholding version of $\theta'$ as an approximation of $\theta$. For any $\epsilon>0$, let us introduce $S_\epsilon$ the soft threshold operator so that $S_\epsilon(x)_i=\sign(x_i)(|x_i|-\epsilon)_+$ for all $1\le i\le d$. The soft threshold operator is  equivalent to the popular LASSO algorithm in the orthogonal design setting for the square loss. We couple the soft-thresholding with a dilatation that has the benefit of ensuring non thresholded coordinates faraway from zero. This allows to get rid of the unwanted factor $1/\Delta$ of the Lemma \ref{lem:supportmin}. It is replaced with a factor $2\|\theta\|_0/\|\theta\|_1$ which corresponds (up to the factor 2) to the best possible scenario for the value of $\Delta$.

\begin{lemma}
  \label{lem:softthreshold}
Let $\theta, \theta' \in \cB_1$ such that $\|\theta- \theta'\|_\infty \leq \epsilon$ and $\|\theta\|_0\leq d_0$. Then, define the dilated soft-threshold 
  \[
    \tilde \theta := S_\epsilon(\theta') \left(1+\frac{2d_0\epsilon}{\|S_\epsilon(\theta')\|_1}\right)\wedge \frac{1}{\|S_\epsilon(\theta')\|_1}
  \]
where by convention $\tilde \theta=0$ when $S_\epsilon(\theta')=0$. Then $\tilde \theta$ satisfies 
\begin{itemize}[label={--},topsep=-4pt,itemsep=0pt,parsep=0pt]
  \item[(i)] $\|\tilde \theta\|_1 \geq \|\theta\|_1$ if $\tilde \theta \neq 0$
  \item [(ii)] $\sign(\tilde\theta_i) \in \{0,\sign( \theta_i)\}$ for all $1\leq i\leq d$
  \item[(iii)] $
  D(\theta,\tilde \theta) \leq 2d_0\epsilon / \|\theta\|_1 \,.
$
\end{itemize}
\end{lemma}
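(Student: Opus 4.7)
I would prove the three claims in the order (ii), (i), (iii), introducing the shorthand $s := \|S_\epsilon(\theta')\|_1$ and $c := (1 + 2d_0\epsilon/s) \wedge (1/s)$ so that $\tilde\theta = c\,S_\epsilon(\theta')$ when $s>0$. The overall strategy is to read off the structural properties (ii) and (i) directly from the assumption $\|\theta-\theta'\|_\infty\leq\epsilon$, and then deduce (iii) as a quantitative application of Lemma~\ref{lem:supportmin}.

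For (ii), the observation is that $\|\theta-\theta'\|_\infty\leq\epsilon$ forces $|\theta_i'|\leq\epsilon$ whenever $\theta_i=0$, so those coordinates are thresholded away: $\Supp(S_\epsilon(\theta'))\subseteq\Supp(\theta)$. Conversely, at coordinates where $S_\epsilon(\theta')_i\neq 0$ we have $|\theta_i'|>\epsilon\geq|\theta_i-\theta_i'|$, which pins $\sign(\theta_i)=\sign(\theta_i')=\sign(S_\epsilon(\theta')_i)$. Since $\tilde\theta$ is a nonnegative scalar multiple of $S_\epsilon(\theta')$, this transfers to $\tilde\theta$.

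For (i), compute $\|\tilde\theta\|_1 = cs = \min(s+2d_0\epsilon,\,1)$ and establish the inequality $s + 2d_0\epsilon \geq \|\theta\|_1$. Splitting $\Supp(\theta)$ into $A:=\{i:|\theta_i|>2\epsilon\}$ and $B:=\Supp(\theta)\setminus A$, on $A$ one has $|\theta_i'|-\epsilon\geq|\theta_i|-2\epsilon>0$, so $s\geq \sum_{i\in A}(|\theta_i|-2\epsilon)$; on $B$ the trivial bound $\sum_{i\in B}|\theta_i|\leq 2\epsilon|B|$ applies; combining these with $|A|+|B|\leq d_0$ gives $\|\theta\|_1\leq s + 2\epsilon|A| + 2\epsilon|B| \leq s + 2d_0\epsilon$. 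Together with $\|\theta\|_1\leq 1$ this yields $\|\tilde\theta\|_1\geq\|\theta\|_1$.

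With (i) and (ii) in hand, Lemma~\ref{lem:supportmin} applies to $(\theta,\tilde\theta)$; using $|\tilde\theta_i|=c(|\theta_i'|-\epsilon)\leq c|\theta_i|$ (from $|\theta_i'|\leq|\theta_i|+\epsilon$) at any coordinate where $\tilde\theta_i\neq 0$, one obtains $D(\theta,\tilde\theta)\leq 1-1/c$. I then split according to which term achieves the minimum defining $c$. If $s+2d_0\epsilon\leq 1$, then $c=1+2d_0\epsilon/s$ and $1-1/c = 2d_0\epsilon/(s+2d_0\epsilon)\leq 2d_0\epsilon/\|\theta\|_1$ by (i). Otherwise $c=1/s$, which itself forces $s\geq 1-2d_0\epsilon$, so $1-1/c = 1-s\leq 2d_0\epsilon\leq 2d_0\epsilon/\|\theta\|_1$ using $\|\theta\|_1\leq 1$. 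The degenerate case $S_\epsilon(\theta')=0$ forces $\|\theta\|_1\leq 2d_0\epsilon$, making the bound $D\leq 1$ trivial. The main obstacle I anticipate is exactly this case split: in the second case the bound $s\geq\|\theta\|_1-2d_0\epsilon$ inherited from (i) is too weak, and one must instead exploit the stronger lower bound $s\geq 1-2d_0\epsilon$ automatically available in that regime, together with $\|\theta\|_1\leq 1$, to close the argument.
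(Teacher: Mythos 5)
Your proposal is correct and follows essentially the same route as the paper's proof: establish the sign/support property from the sup-norm bound, derive $\|S_\epsilon(\theta')\|_1\geq\|\theta\|_1-2d_0\epsilon$ to get (i), then apply Lemma~\ref{lem:supportmin} and split on which term of the minimum defining $\tilde\theta$ is active (your two cases are exactly the paper's two cases, with the bound $1-1/c$ being a mild repackaging of the paper's coordinatewise computation of $(|\tilde\theta_i|-|\theta_i|)/|\tilde\theta_i|$). The degenerate case $S_\epsilon(\theta')=0$ is also handled identically.
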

Performing this transformation requires the knowledge of the values of $\epsilon$ and $d_0$ that are not observed. However, performing an exponential grid on $\epsilon$ from $\nicefrac{1}{T}$ to $U$ only harms the complexity by a factor $\log(UT)$.

\section{Proofs}

\subsection{Proof of Theorem~\ref{thm:BOA_fastrate}}
\label{app:proof_BOAfastrate}
  
  Algorithm~\ref{alg:BOA} is a particular case of the Bernstein Online Aggregation algorithm (BOA) with fixed learning rates of~\cite{Wintenberger2014}\footnote{It is also a specific case of Squint of~\cite{vanErvenKoolen2015} with a discrete distribution over the learning rates} applied on a particular set of experts $\cK$. We make more clear the connexion thereafter.  We start our proof with Theorem~3.2 of \cite{Wintenberger2014} that states that for any distribution $\tilde \pi$ over the set of experts $j\in \cK$:
    \begin{equation}
      \sum_{t=1}^T \E_{j\sim \tilde \pi}[r_{j,t}] \leq  \E_{j \sim \tilde \pi} \left[\eta_j \sum_{t=1}^T r_{j,t}^2  + \frac{\log (\tilde \pi_j/ \tilde \pi_{j,0})}{\eta_j}\right]\,,
      \label{eq:regretBOAmeta}
    \end{equation}
  where $r_{j,t} = \nabla \ell_t(\hat \theta_{t-1})^\top (\hat \theta_{t-1} - \theta_{j})$, where $\tilde \pi_{j,0}$ are the initial weights assigned to the experts by the algorithm. In the original version of the BOA algorithm, each expert $\theta_k$ is assigned to a single learning rate $\eta_k$. In Algorithm~\ref{alg:BOA} each parameter $\theta_k$ for $k=1,\dots,K$ is replicated several times, each replicate being assigned a different learning rate $\eta_i = e^{-i} E^{-1}$ for $1\le i \le \log (ET^2)$. Algorithm~\ref{alg:BOA} corresponds  to applying BOA on experts indexed by couples $j = (k,i)$ of a parameter $\theta_k$ for $k=1,\dots,K$ and a learning-rate $\eta_i = e^{-i} E^{-1}$. Each couple $j=(k,i)$ is assigned the initial weight $\tilde \pi_{j,0} = \hat \pi_{k,0}/\log(ET^2)$.  We  refer to these couples of parameter-learning rate $j=(k,i)$ as experts. 
  
 For each parameter $\theta_k,k \in \{1,\dots,K\}$, let $1 \leq i_k \leq \log (ET^2)$ be the index of a learning rate which will be chosen later by the analysis in order to optimize the final bound. Let $\pi$ be a distribution over the index set $\{1,\dots,K\}$. We now apply Inequality~\eqref{eq:regretBOAmeta} to a specific distribution $\tilde \pi$ on the experts. We choose $\tilde \pi$ so that it assigns all the mass $\pi_k$ on the expert $(k,i_k)$ and no mass on the experts $(k,i)$ for $i\neq i_k$. In other words, $\tilde \pi_j = \pi_k \indic_{i=i_k}$. Then $\log(\tilde \pi_j/ \tilde \pi_{j,0})=\log(\pi_k/\hat \pi_{k,0} \log (ET^2))$ and Inequality~\eqref{eq:regretBOAmeta} entails
\begin{align}
\sum_{t=1}^T \E_{k \sim \pi}[r_{k,t}]
	&  \leq  \E_{k \sim \pi}\left[\underbrace{e^{-i_k }E^{-1}}_{:=\lambda_k}\sum_{t=1}^T    r_{k,t}^2 + e^{i_k} E \big(\log (\pi_k/ \hat \pi_{k,0})+\log\log (ET^2)\big)\right] \nonumber \\
	& =  \E_{k \sim \pi}\left[ \lambda_k \sum_{t=1}^T   r_{k,t}^2 + \frac{ \log (\pi_k/ \hat \pi_{k,0})+\log\log (ET^2)}{\lambda_k }\right]\,, \label{eq:regretlambdak0}
\end{align}
where we defined $\lambda_k := e^{-i_k}E^{-1}$. Now, by choosing $i_k$, this bound may be optimized with respect to any $\lambda_k$ of the form $e^{-i_k}E^{-1}$, with $1 \leq i_k \leq \log (ET^2)$. To get the minimum over any $\lambda_k >0$, we pay additional additive and multiplicative terms due to edge effects that we compute now.
Fix $k>0$ and define $\smash{V_k = \sum_{t=1}^T r_{k,t}^2}$. The minimum is reached when both terms in~\eqref{eq:regretlambdak0} are equal. This yields the optimal choice $\lambda _k \approx (V_k/a)^{-1/2}$, where $a_k := \log(\pi_k/\pi_{k,0}) + \log \log (ET^2)$. However, because of edge effects, this is only possible when $1/(ET)^2 \leq (V_k/a)^{-1/2} \leq 1/(Ee)$. We distinguish three cases:
   \begin{itemize}
      \item if $\sqrt{{a_k}/{V_k}} > 1/(eE)$: then, we choose $\lambda_k = 1/(eE)$, which yields:
      \[
          \lambda_k V_k + \frac{a_k}{\lambda_k} \leq \frac{2a_k}{\lambda_k} = 2ea_kE \leq 6a_kE
      \]
      \item if $1/(ET)^2 \leq (V_k/a_k)^{-1/2} \leq 1/(Ee)$: then, we can choose $\lambda_k$ such that
      \[
          \frac{\lambda_k}{\sqrt{e}} \leq  (V_k/a_k)^{-1/2} \leq \sqrt{e} \lambda_k \,,
      \]
      which entails $\lambda_k V_k + \frac{a_k}{\lambda_k} \leq  2\sqrt{e} \sqrt{a_k V_k}  \leq 4 \sqrt{a_k V_k}$
      \item if $\sqrt{{a_k}/{V_k}} < (ET)^{-2}$: then, the choice $\lambda_k  = (ET)^{-2}$ gives
      \[
          \lambda_k V_k + \frac{a_k}{\lambda_k} \leq 2 \lambda_k V_k  = \frac{2V_k}{E^2 T^2} \leq  \frac{2}{T} \,,
      \]
      because $r_{k,t}^2 \leq E^2$.
   \end{itemize}
   Putting the three cases together and plugging into Inequality~\eqref{eq:regretlambdak0} yields
  \begin{equation}
    \label{eq:firstregret}
     \sum_{t=1}^T \E_{k \sim \pi}[r_{k,t}] \leq  \E_{k \sim \pi}\left[ 4 \sqrt{a_k V_k}  + 6 a_k E\right] + \frac{2}{T}.
  \end{equation}
   We recall Young's inequality.
   \begin{lemma}[Young's inequality]
  \label{lem:young}
  For all $a,b\geq 0$ and $p,q >0$ such that $1/p+1/q = 1$, then
  $ab \leq a^p/p + b^q/q$.
\end{lemma}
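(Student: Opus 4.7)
The plan is to prove Young's inequality via the convexity of $\exp$ (equivalently the concavity of $\log$). First I would dispose of the trivial case $a=0$ or $b=0$ separately: the claim reduces to $0 \leq a^p/p + b^q/q$, which holds because each term on the right is nonnegative (using $p,q>0$).

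For the main case $a,b>0$, I would write $ab = \exp(\log a + \log b)$ and, using the hypothesis $1/p + 1/q = 1$, recast this as $ab = \exp\bigl(\tfrac{1}{p}\log(a^p) + \tfrac{1}{q}\log(b^q)\bigr)$. Since $\exp$ is convex and $(1/p,1/q)$ is a probability vector, Jensen's inequality yields $ab \leq \tfrac{1}{p}\exp(\log a^p) + \tfrac{1}{q}\exp(\log b^q) = a^p/p + b^q/q$, which is exactly the claim.

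An alternative route, should one prefer not to invoke Jensen, is to fix $b>0$ and minimize $f(a) := a^p/p + b^q/q - ab$ over $a \geq 0$. The stationarity condition $f'(a) = a^{p-1} - b = 0$ gives $a_\ast = b^{1/(p-1)} = b^{q-1}$, where the equality $q-1 = 1/(p-1)$ follows from $1/p+1/q=1$. Substituting back and simplifying with the identity $p+q = pq$ shows $f(a_\ast)=0$, and convexity of $f$ (from $p \geq 1$, which is automatic when $1/p+1/q=1$ with $p,q>0$) forces $f \geq 0$ throughout $[0,\infty)$.

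There is no real obstacle in either argument; both are one-line reductions to a standard convexity fact. I would present the Jensen/convexity proof for its brevity and because it makes transparent the origin of the weights $1/p$ and $1/q$ in the bound.
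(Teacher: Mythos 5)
Your proposal is correct. Note, however, that the paper does not actually prove this lemma: it merely \emph{recalls} Young's inequality as a standard fact before applying it, so there is no in-paper argument to compare against. Both of your routes are sound and standard --- the Jensen/convexity-of-$\exp$ argument (with the weights $1/p$, $1/q$ forming a probability vector) and the calculus argument minimizing $f(a)=a^p/p+b^q/q-ab$ at $a_\ast=b^{q-1}$, where the identities $q-1=1/(p-1)$ and $p+q=pq$ do exactly the work you describe; your handling of the degenerate case $a=0$ or $b=0$ is also correct. One microscopic point: the hypothesis $1/p+1/q=1$ with $p,q>0$ in fact forces $p>1$ strictly (not merely $p\geq 1$), which only strengthens the convexity claim you need. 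Either proof would be a perfectly acceptable substitute for the paper's bare citation.
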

 Applying it, with $p=q=2$, and $a = \sqrt{2\lambda_kV_k}$ and $b=\sqrt{8a_k/\lambda_k}$, we get $4\sqrt{a_kV_k} \leq \lambda_k V_k + 4a_k/\lambda_k$ for any $\lambda_k >0$. Therefore, substituting into Inequality~\eqref{eq:firstregret}, for any distribution $\pi$ over $\{1,\dots,K\}$, we have
   \begin{equation}
      \sum_{t=1}^T \E_{k \sim \pi}[r_{k,t}] \leq  \E_{k \sim \pi}\left[  \lambda_k V_k  +  \frac{4a_k}{\lambda_k} + 6 a_k E\right] + \frac{2}{T}\,,
      \label{eq:regretlambdak}
   \end{equation}
  where we recall that $V_k = \sum_{t=1}^T r_{k,t}^2$ and $a_k = \log(\pi_k/\pi_{k,0}) + \log \log (ET^2)$. For simplicity, from now on, we will denote $\E_{k \sim \pi}$ by $\E_{\pi}$. Using Theorem 4.1  of \cite{Wintenberger2014} for $\eta_{j,t}=\lambda_j$ independent of $t$, we obtain with probability $1-e^{-x}$ and integrating with respect to $\pi$
\begin{align}
\sum_{t=1}^T \E_{t-1}[\E_\pi[r_{k,t}]]&\le \sum_{t=1}^T \E_\pi[r_{k,t}] +\E_\pi\left[ \lambda_k \sum_{t=1}^T   r_{k,t}^2 + \frac{x}{\lambda_k }\right] \nonumber \\
& \stackrel{\eqref{eq:regretlambdak}}{\le}  \E_\pi\left[ 2\lambda_k  \sum_{t=1}^T   r_{k,t}^2 + \frac{x+4a_k}{\lambda_k} + 6 a_k E\right] + \frac{2}{T} \label{eq:risklambdak}.
\end{align}
To apply Assumption~\ref{ass:a2}, we need to transform the second order term (the sum of $r_{k,s}^2$ in the right-hand side) into a cumulative risk. This can be done using a Poissonian inequality for martingales (see for instance Theorem 9 of \cite{GaillardWintenberger2017}): with probability at least $1-e^{-x}$
\[
  \sum_{t=1}^T r_{k,t}^2 \leq 2 \sum_{t=1}^T \E_{t-1}\big[r_{k,t}^2] + \frac{9}{4} E^2 x \,.
\]
Substituting into the previous regret inequality, this yields for any $\lambda_k>0$ and any distribution $\pi$ over $\{1,\dots,K\}$
\begin{equation}
\sum_{t=1}^T \E_{t-1}\Big[\E_\pi[r_{k,t}]\Big]
\le  \E_\pi\bigg[ 4\lambda_k  \sum_{t=1}^T   \E_{t-1}[r_{k,t}^2] + \frac{9}{2} \lambda_k E^2 x
  + \frac{4a_k+x}{\lambda_k } +  6 a_k E\bigg] + \frac{2}{T}.
    \label{eq:riskrisklambdak}
\end{equation}
Now, we are ready to apply Assumption~\ref{ass:a2} in order to cancel the sum in the right-hand side. Assumption~\ref{ass:a2} ensures that for any time $t \geq 1$
\[
  \E_{t-1}\big[\ell_t(\hat \theta_{t-1}) - \ell_t(\theta_k) \big] \leq \E_{t-1}[r_{k,t}] -\left( \alpha \E_{t-1}[r_{k,t}^2] \right)^{1/\beta} \,.
\]
Therefore, summing over $t=1,\dots,T$ and using the preceding inequality with probability at least $1-2e^{-x}$
\begin{align}
& \E_\pi\bigg[ \sum_{t=1}^T \E_{t-1}\big[\ell_t(\hat \theta_{t-1}) -  \ell_t(\theta_k) \big]\bigg]
  \leq \E_\pi\left[\sum_{t=1}^T\E_{t-1}[ r_{k,t}] - \left(\alpha  \E_{t-1}[r_{k,t}^2] \right)^{1/\beta}\right] \nonumber \\
  &  \qquad \le   \E_\pi\bigg[ 4 \lambda_k  \sum_{t=1}^T    \E_{t-1}[r_{k,t}^2]  -  \sum_{t=1}^T \Big(\alpha   \E_{t-1}[r_{k,t}^{2}]  \Big)^{1/\beta}  + \frac{9}{2}\lambda_k E^2 x + \frac{ 4a_k+x}{\lambda_k } + 6a_kE\bigg] + \frac{2}{T}. \label{eq:regret_eta_alpha}
\end{align}

Now, we use Young's inequality (see Lemma~\ref{lem:young}) again to cancel the two sums in the right-hand side. Let $\gamma >0$ to be fixed later by the analysis. Using $a = \E_{t-1}[r_{k,t}^2]/\gamma$, $b=\gamma$, $p=1/\beta$, and $q = 1/(1-\beta)$,  it yields
\[
  \E_{t-1} [ r_{k,t}^2]  \leq \frac{\beta \big( \E_{t-1}[r_{k,t}^{2}]\big)^{1/\beta}}{\gamma^{1/\beta}} + \big(1 - \beta\big)\gamma^{1/(1-\beta)} \,.
\]
Thus,
\[
  \lambda_k  \E_{t-1}[r_{k,t}^2] \leq \frac{\lambda_k \beta \big( \E_{t-1}[r_{k,t}^{2}]\big)^{1/\beta}}{\gamma^{1/\beta}} + \lambda_k \big(1 - \beta\big)\gamma^{1/(1-\beta)} \,.
\]
The choice $\gamma = (4\lambda_k\beta)^{\beta} /\alpha$ yields $4\lambda_k\beta/\gamma^{1/\beta} = \alpha^{1/\beta}$, which entails
\begin{align}
  4 \lambda_k   \E_{t-1}[r_{k,t}^2] - \big(\alpha \E_{t-1}[r_{k,t}^2]\big)^{1/\beta}
    & \leq  4 \lambda_k \big(1 - \beta\big) \gamma^{1/(1-\beta)} \nonumber \\
    & = 4 \lambda_k \big(1 - \beta\big) \Big(\frac{(4\lambda_k\beta)^\beta}{\alpha}\Big)^{1/(1-\beta)} \nonumber  \\
    & =  4 \big(1 - \beta\big) (4\beta)^{\beta/(1-\beta)}\Big(\frac{\lambda_k}{\alpha}\Big)^{1/(1-\beta)} \nonumber \\
        & \le   4 \Big(\frac{4\lambda_k}{\alpha}\Big)^{1/(1-\beta)} 
        \label{eq:applyYoungs} \,.
\end{align}
Summing over $t$ and substituting into Inequality~\eqref{eq:regret_eta_alpha}, we get
\begin{equation}
  \E_\pi\left[ \sum_{t=1}^T \E_{t-1}\big[\ell_t(\hat \theta_{t-1}) - \ell_t(\theta_k) \big] \right]   
  \le  E_\pi\biggl[ \underbrace{4\Big(\frac{4\lambda_k}{\alpha}\Big)^{1/(1-\beta)} T +\frac{   4a_k+x}{\lambda_k } }_{=: R_k} +\frac{9}{2}\lambda_k E^2x + 6a_kE  \biggr]  +\frac{2}{T}\,. \label{eq:regret_eta_alpha2}
\end{equation}
We optimize $\lambda_k$ by equalizing the two main terms of $R_k$:
\[
  4 \Big(\frac{4\lambda_k}{\alpha}\Big)^{1/(1-\beta)} T = \frac{4a_k + x}{\lambda_k}
    \Leftrightarrow \lambda_k = \left(\frac{4a_k + x}{4T}\right)^{\frac{1-\beta}{2-\beta}} \left(\frac{\alpha}{4}\right)^{\frac{1}{2-\beta}} \,.
\]
We express $R_k$ in termes of  $\lambda_k$ using this identity
\[
  \frac{R_k}{T} = 2 \frac{4a_k+x}{\lambda_k T} = 2\left(\frac{4a_k+x}{\alpha T}\right)^{\frac{1}{2-\beta}} 4^{\frac{1-\beta}{2-\beta}} \leq 4\left(\frac{16a_k+4x}{\alpha T}\right)^{\frac{1}{2-\beta}} \,.
\]
The choice $\lambda_k = 1/(2E)$ would give
\[
  \frac{R_T}{T}  \leq 4 \Big(\frac{4\lambda_k}{\alpha}\Big)^{1/(1-\beta)} + \frac{4a_k + x}{T\lambda_k} \leq \frac{(4a_k + x) E}{T} \,.
\]
So that we can assume $\lambda_k \leq 1/(2E)$ and 
\[
  \frac{R_T}{T} \leq 4\left(\frac{16a_k+4x}{\alpha T}\right)^{\frac{1}{2-\beta}} + \frac{(4a_k + x) E}{T}
\]
Substituting into Inequality~\eqref{eq:regret_eta_alpha2} and upper-bounding $\lambda_kE^2 \leq E/2$, gives
\begin{equation*}
  \frac{1}{T} \E_\pi\left[ \sum_{t=1}^T \E_{t-1}\big[\ell_t(\hat \theta_{t-1}) - \ell_t(\theta_k) \big] \right]   
  \le  E_\pi\biggl[ 4\left(\frac{16a_k+4x}{\alpha T}\right)^{\frac{1}{2-\beta}} + \frac{(10a_k+4x)E}{T}  \biggr]  +\frac{2}{T^2}\,. 
\end{equation*}
Replacing $a_k = \log(\pi_k/\pi_{k,0})+\log\log(ET^2)$ concludes the proof.

\subsection{Proof of Theorem~\ref{thm:slowrate}}

\label{app:proof_slowrate}
   We denote by $\theta_1,\dots,\theta_K$ the elements of $\Theta_0$. We recall that we use a particular case of Algorithm~\ref{alg:BOA}.  We can thus follow the proof of Theorem~\ref{thm:BOA_fastrate} and start from Inequality~\eqref{eq:firstregret}. We apply it to a Dirac distributions $\pi$ on $\{1,\dots, K\}$. We get that for any $1\le k \leq K$, for any $\lambda_k >0$,
   \begin{equation}
    \label{eq:firsteq}
      \sum_{t=1}^T  r_{k,t} \leq  4 \sqrt{a \sum_{t=1}^T r_{k,t}^2}  + 6aE + \frac{2}{T} \,.
   \end{equation}
  where $a:= \log(K) + \log \log (ET^2)$ and where we remind the notation of the linearized instantaneous regret $r_{k,t} = \nabla \ell_t(\hat  \theta_{t-1})^\top (\hat  \theta_{t-1} - \theta_k)$ for $1\leq k\leq K$.

   Let $\theta^*  \in \R^d$, let $\epsilon := D(\theta^*,\Theta_0)$ and $k^* \in \{1\leq k\leq K\}$ such that $\|\theta^* - (1-\epsilon)\theta_{k^*}\|_1 \leq \epsilon$.  Then it exists $\tilde \theta$ with $\|\tilde \theta\|_1\leq 1$ such that 
   \begin{equation}
    \label{eq:decomposition}
    \theta^* = (1-\epsilon)\theta_{k^*} +\epsilon \tilde \theta \,.
    \end{equation} 
    Since $\{\theta \in \cB_1:\|\theta\|_1=1, \|\theta\|_0=1\} \subset \Theta_0$, we can write $\tilde \theta$ as a combination of elements of $\Theta_0$. Hence, from~\eqref{eq:decomposition}, it exists a distribution $\pi=(\pi_1,\ldots, \pi_{K}) \in \Delta_{K}$ such that
 \[
    \theta^* = \sum_{k=1}^{K} \pi_{k}  \theta_{k}   \quad \mbox{and}\quad 1-\pi_{k^*} \le \epsilon.
 \]
 Denoting $r_t:= \nabla \ell_t(\hat  \theta_{t-1})^\top(\hat  \theta_{t-1} - \theta^*)$, we thus get 
 \begin{multline*}
    r_t :=  \nabla \ell_t(\hat  \theta_{t-1})^\top(\hat  \theta_{t-1} - \theta^*) =   \nabla \ell_t(\hat \theta_{t-1})^\top \Big(\hat \theta_{t-1} -  \sum_{k=1}^{K} \pi_k \theta_k \Big) \\
    =  \nabla \ell_t(\hat \theta_{t-1})^\top(\hat \theta_{t-1} -  \E_{k \sim \pi} [\theta_k] ) =  \E_{k \sim \pi} \Big[ r_{k,t}\Big]\,,
 \end{multline*}
and integrating Inequality~\eqref{eq:firsteq} with respect to $\pi$, we obtain
  \begin{align}
  \sum_{t=1}^T r_t  & \leq    \E_{k \sim \pi}\bigg[ 4 \sqrt{ a\sum_{t=1}^T \big(\nabla \ell_t(\hat \theta_{t-1})^\top(\hat \theta_{t-1} -  \theta^*  +  \theta^* - \theta_k)\big)^2}  \bigg] + \frac{2}{T} + 6aE \nonumber  \\
        &  \leq 4  \sqrt{a \sum_{t=1}^T r_t^2 }+  4  \E_{k \sim \pi}\bigg[\sqrt{a \sum_{t=1}^T \big(\nabla \ell_t(\hat \theta_{t-1})^\top(\theta^*  - \theta_k)\big)^2 }\bigg] + \frac{2}{T} + 6aE\,.\label{eq:someeq}
  \end{align}
  Let us upper bound the second term of the right hand side.
\begin{align}
 \E_{k \sim\pi}\bigg[ & \sqrt{ \sum_{t=1}^T  \big(\nabla \ell_t(\hat \theta_{t-1})^\top(\theta^*  - \theta_k)\big)^2} \bigg] \nonumber \\
 &\le \sqrt{ \sum_{t=1}^T \|\nabla \ell_t(\hat \theta_{t-1})\|_\infty^2} \sum_{k=1}^{K} \pi_k \|\theta^* - \theta_{k}\|_1 \nonumber \\
 &\le \sqrt{ \sum_{t=1}^T \|\nabla \ell_t(\hat \theta_{t-1})\|_\infty^2}\bigg(\pi_{k^*}\| \theta^*- \theta_{k^*}\|_1  +(1-\pi_{k^*})\max_{1\le k\le K}\| \theta^*- \theta_{k}\|_1\bigg)\nonumber \\
&\le \sqrt{ \sum_{t=1}^T \|\nabla \ell_t(\hat \theta_{t-1})\|_\infty^2 }\bigg(\pi_{k^*}\| \theta^*- \theta_{k^*}\|_1+2 (1-\pi_{k^*}) \bigg) \,, \label{eq:previouseq}
\end{align}
where the last inequality is because $\|\theta^* - \theta_{k}\|_1 \leq \|\theta_k\|_1 + \|\theta^*\|_1 \leq 2$. We also have from the definition of $\theta^*$ (see before ~\eqref{eq:decomposition})
\[
\| \theta^*- \theta_{k^*}\|_1 \leq \| \theta^*- (1-\epsilon)\theta_{k^*}+\epsilon \theta_{k^*}\|_1 \leq \| \theta^*- (1-\epsilon)\theta_{k^*}\|_1 +\epsilon \|\theta_{k^*}\|_1 \leq 2\epsilon .
\]
Therefore, substituting into~\eqref{eq:previouseq} we get
\[
 \E_{k \sim\pi}\bigg[ \sqrt{ \sum_{t=1}^T  \big(\nabla \ell_t(\hat \theta_{t-1})^\top(\theta^*  - \theta_k)\big)^2} \bigg] \\
\le 4 \epsilon \sqrt{ \sum_{t=1}^T \|\nabla \ell_t(\hat \theta_{t-1})\|_\infty^2 } = 4\epsilon  \bar G_T \sqrt{T}  ,
\]
where $\bar G_T :=  \sqrt{\frac{1}{T} \sum_{t=1}^T \|\nabla \ell_t(\hat \theta_{t-1})\|_\infty^2 } \leq G $.

Therefore, substituting into Inequality~\eqref{eq:someeq}, we have
\[
  \sum_{t=1}^T r_t \leq 4 \sqrt{a \sum_{t=1}^T r_t^2}  +  16 \epsilon \bar G_T \sqrt{aT} + \frac{2}{T} + 6aE\,,
\]
which yields by Young's inequality for any $\lambda >0$
\begin{equation}
  \sum_{t=1}^T r_t \leq \lambda \sum_{t=1}^T r_t^2  + \frac{4 a}{\lambda} +  \underbrace{16 \epsilon \bar G_T \sqrt{aT} + \frac{2}{T} + 6aE}_{=: z}\,.
 \label{eq:ineqslow}
\end{equation}
Now, we recognize an inequality similar to Inequality~\eqref{eq:regretlambdak}. There only are a few technical differences which do not matter in the analysis: we consider here a Dirac distribution $\pi$ on the comparison parameter $\theta^*$ and we have some additional rest terms that we denote by $z := 16\epsilon  \bar G_T \sqrt{aT} + \frac{2}{T} + 6aE$ for simplicity. We can then follow the lines of the proof of Theorem~\ref{thm:BOA_fastrate} after Inequality~\eqref{eq:regretlambdak}
\begin{eqnarray}
  \sum_{t=1}^T \E_{t-1}[r_t] 
    & \stackrel{\text{Thm 4.1 of \cite{Wintenberger2014}}}{\leq} & \sum_{t=1}^T r_t + \lambda \sum_{t=1}^T r_t^2 + \frac{x}{\lambda} \nonumber \\
    & \stackrel{\eqref{eq:ineqslow}}{\leq} & 2\lambda \sum_{t=1}^T r_t^2  + \frac{4 a + x}{\lambda} +  z \nonumber \\
    & \stackrel{\text{Thm 9 of \cite{GaillardWintenberger2017}}}{\leq} & 4\lambda \sum_{t=1}^T \E_{t-1}[r_t^2]  + \frac{4 a + x}{\lambda}  + \frac{9}{2}\lambda E^2 x+  z . \label{eq:beforeapplyingA2}
\end{eqnarray}
Using Assumption~\ref{ass:a2} then yields
\begin{eqnarray*}
  \sum_{t=1}^T \E_{t-1}\big[\ell_t(\hat \theta_{t-1}) - \ell_t(\theta^*)\big] 
    & \leq & \sum_{t=1}^T \E_{t-1}[r_t] - \big(\alpha \E_{t-1}[r_t^2]\big)^{1/\beta} \\
    & \stackrel{\eqref{eq:beforeapplyingA2}}{\leq} &  4\lambda \sum_{t=1}^T \E_{t-1}[r_t^2]  - \big(\alpha \E_{t-1}[r_t^2]\big)^{1/\beta} + \frac{4 a + x}{\lambda}  + \frac{9}{2}\lambda E^2 x+  z \\
    & \stackrel{\eqref{eq:applyYoungs}}{\leq} & 4 \Big(\frac{4\lambda}{\alpha}\Big)^{1/(1-\beta)} + \frac{4 a + x}{\lambda}  + \frac{9}{2}\lambda E^2 x+  z .
\end{eqnarray*}

This yields an inequality similar to Inequality~\eqref{eq:regret_eta_alpha2}. Optimizing in $\lambda >0$, as we did for Inequality~\eqref{eq:regret_eta_alpha2} gives: 
\[
  \lambda = \min\left\{\frac{1}{2E}, \left(\frac{4a + x}{4T}\right)^{\frac{1-\beta}{2-\beta}} \left(\frac{\alpha}{4}\right)^{\frac{1}{2-\beta}}\right\},
\]
and 
\[
  \frac{1}{T} \sum_{t=1}^T \E_{t-1}\big[\ell_t(\hat \theta_{t-1}) - \ell_t(\theta^*)\big] \leq 4 \left( \frac{16a+4x}{\alpha T}\right)^{\frac{1}{2-\beta}} + \frac{(4a+x)E}{T} + \frac{9Ex}{4T} + \frac{z}{T} \,.
\]
where we recall that $a = \log(K) + \log \log (ET^2)$, $z = 16\epsilon  \bar G_T \sqrt{aT} + \frac{2}{T} + 6aE$ and $\bar G_T :=  \sqrt{\frac{1}{T} \sum_{t=1}^T \|\nabla \ell_t(\hat \theta_{t-1})\|_\infty^2 } \leq G $. Replacing $z$ with its definition and simplifying yields
\begin{equation}
  \frac{1}{T} \sum_{t=1}^T \E_{t-1}\big[\ell_t(\hat \theta_{t-1}) - \ell_t(\theta^*)\big] \leq 4 \left( \frac{16a+4x}{\alpha T}\right)^{\frac{1}{2-\beta}} + \frac{(10a+4x)E}{T} +  16 \epsilon  \bar G_T\sqrt{\frac{a}{T}} + \frac{2}{T^2} \,.
  \label{eq:thmslowrateexact}
\end{equation}
Keeping the main terms only and replacing $\epsilon := D(\theta,\Theta_0)$ concludes the proof.

\subsection{Proof of Lemma~\ref{lem:rlarge}}
\label{app:rlarge}
Let $\pi : = \|\theta' - \theta\|_1/(\|\theta' - \theta\|_1+1-\|\theta\|_1)$. Then, thanks to the triangular inequality, we have
\begin{multline*}
  \big\|\theta - (1-\pi)\theta'\big\|_1 = \big\|(1-\pi) (\theta-\theta') + \pi \theta\big\|_1 \leq (1-\pi)\|\theta - \theta'\|_1 + \pi \|\theta\|_1 \\
    = \frac{(1-\|\theta\|_1)\|\theta - \theta'\|_1 + \|\theta - \theta'\|_1 \|\theta\|_1}{\|\theta-\theta'\|_1 + 1 - \|\theta\|_1} = \pi \,.
\end{multline*}
The Definition~\ref{def:distance} of $D(\theta,\theta')$ concludes the proof.

\subsection{Proof of Lemma~\ref{lem:supportmin}}
\label{app:supportmin}
Denote $\pi := 1- \min_{1\leq i\leq d} |\theta_i|/|\theta'_i|$. Then, for any $1\leq i \leq d$, $|\theta_i| \geq (1-\pi) |\theta'_i|$. Because $\theta'_i$ and $\theta_i$ have same signs, this yields $\big| \theta_i - (1-\pi) \theta'_i \big| = \big| \theta_i\big| - (1-\pi) \big|\theta'_i\big|$ for all $1\leq i\leq d$. Summing over $i=1,\dots,d$, entails
\begin{multline}
	\big\| \theta - (1-\pi) \theta'\big\|_1 = \sum_{i=1}^d \Big| \theta_i - (1-\pi) \theta'_i \Big| = \sum_{i=1}^d \big| \theta_i\big| - (1-\pi) \big|\theta'_i\big| \\
	= \|\theta\|_1 - (1-\pi) \|\theta'\|_1 \stackrel{\|\theta'\|_1\geq \|\theta\|_1}{\leq} \pi\|\theta\|_1 \leq \pi.
	\label{eq:D1}
\end{multline}
Therefore, the Definition \ref{def:distance} of $D(\theta,\theta')$ concludes the proof of the first inequality. Now, let $1\leq i\leq d$, if $|\theta'_i| \leq |\theta_i|$ then $1-|\theta_i|/|\theta'_i| \leq 0$ and the second inequality holds. Otherwise, we have
\[
  1 - \frac{|\theta_i|}{|\theta'_i|} = \frac{|\theta'_i|-|\theta_i|}{|\theta'_i|} \stackrel{|\theta'_i| \geq |\theta_i|}{=} \frac{|\theta'_i - \theta_i|}{|\theta'_i|} \stackrel{|\theta'_i| \geq |\theta_i|}{\leq} \frac{|\theta'_i - \theta_i|}{|\theta_i|} \leq \frac{\|\theta' - \theta\|_\infty}{\Delta} \,,
\]
which concludes the proof of the Lemma.

\subsection{Proof of Lemma~\ref{lem:softthreshold}}
\label{app:softthreshold}

Let $\theta, \theta' \in \cB_1$ such that $\|\theta - \theta'\|_\infty \leq \epsilon$. First, we check that $\tilde \theta$ satisfies the assumptions of Lemma~\ref{lem:supportmin}. 
Since $\|\theta' - \theta\|_\infty \leq \epsilon$, for all coordinates $1\leq i\leq d$, we have $S_\epsilon(\theta')_i = 0$ or $\sign(S_\epsilon(\theta'))_i = \sign(\theta_i)$. Therefore,
$\sign(\tilde \theta_i) = \sign(S_\epsilon(\theta')_i) \in \{0,\sign(\theta_i)\}$. Furthermore,
\begin{equation}
  \|S_\epsilon(\theta')\|_1
  \geq \sum_{i \in \Supp(\theta)} \big|S_\epsilon(\theta')_i\big|
  \geq \sum_{i \in \Supp(\theta)} \big(\big|\theta'_i\big| - \epsilon \big)
  \geq \sum_{i \in \Supp(\theta)} \big(\big|\theta_i\big| - 2\epsilon \big) \geq \|\theta\|_1 - 2d_0\epsilon \,.
  \label{eq:norme1small}
\end{equation}
If $S_\epsilon(\theta') = 0$, then $\|\tilde \theta\|_1 = 0$ and $\|\theta\|_1 \leq 2d_0\epsilon$ so that $D(\theta,\tilde \theta) \leq 1\leq 2d_0\epsilon/\|\theta\|_1$. Therefore, we can assume from now that $S_\epsilon(\theta') \neq 0$. By definition of $\tilde \theta$, Inequality~\eqref{eq:norme1small} yields $\|\tilde \theta\|_1 = \big(\|S_\epsilon(\theta')\|_1 + 2d_0 \epsilon \big)  \wedge 1 \geq \|\theta\|_1$. Then $\tilde \theta$ satisfies the assumptions of Lemma~\ref{lem:supportmin}, which we can apply
\begin{equation}
  D(\theta,\tilde \theta)
     \leq 1- \min_{1\leq i\leq d} \frac{|\theta_i|}{|\theta'_i|}
     = \max_{i \in \Supp(\tilde \theta)} \frac{|\tilde \theta_i| - |\theta_i|}{|\tilde \theta_i|} \,.{}
     \label{eq:D1i}
\end{equation}

We consider two cases:
\begin{itemize}
  \item $\|S_\epsilon(\theta')\|_1 \geq 1-2d_0\epsilon$ in which case for $i \in \Supp(\tilde\theta)$
  \[
    \tilde \theta_i = \frac{S_\epsilon(\theta')_i}{\|S_\epsilon(\theta')\|_1} = \frac{(|\theta'_i| - \epsilon)\sign(\theta_i')}{\|S_\epsilon(\theta')\|_1}
  \]
  so that $|\tilde \theta_i| = (|\theta'_i|-\epsilon)/\|S_\epsilon(\theta')\|_1$ and upper-bounding $-|\theta_i| \leq -|\theta'_i| - \epsilon$ we get
  \begin{multline*}
      \frac{|\tilde \theta_i| - |\theta_i|}{|\tilde \theta_i|} =
      \frac{|\theta'_i| - \epsilon - |\theta_i| \|S_\epsilon(\theta')\|_1}{|\theta'_i|-\epsilon} \leq
      \frac{|\theta'_i| - \epsilon - (|\theta'_i| - \epsilon) \|S_\epsilon(\theta')\|_1}{|\theta'_i|-\epsilon} \\
      \leq 1- \|S_\epsilon(\theta')\|_1 \leq 2d_0\epsilon \leq \frac{2d_0\epsilon}{\|\theta\|_1} \,.
  \end{multline*}
  Substituting into Inequality~\eqref{eq:D1i} concludes this case.
  \item Otherwise $\|S_\epsilon(\theta')\|_1 \leq 1-2d_0\epsilon$ and for $i \in \Supp(\tilde\theta) = \Supp(S_\epsilon(\theta'))$
  \[
    |\tilde \theta_i| = |S_\epsilon(\theta')_i| \Big(1 + \frac{2d_0\epsilon}{\|S_\epsilon(\theta')\|_1}\Big) = (|\theta'_i| - \epsilon) \Big(1 + \frac{2d_0\epsilon}{\|S_\epsilon(\theta')\|_1}\Big) \,,
  \]
  which implies upper-bounding $-|\theta_i| \leq -|\theta'_i| - \epsilon$,
  \begin{align*}
      \frac{|\tilde \theta_i| - |\theta_i|}{|\tilde \theta_i|}
      & =
      \frac{(|\theta'_i| - \epsilon) \Big(1 + \frac{2d_0\epsilon}{\|S_\epsilon(\theta')\|_1}\Big)  - |\theta_i| }{\big(|\theta'_i|-\epsilon\big) \Big(1 + \frac{2d_0\epsilon}{\|S_\epsilon(\theta')\|_1}\Big)}  \\
      & \leq
      \frac{\big(|\theta'_i| - \epsilon\big)  \frac{2d_0\epsilon}{\|S_\epsilon(\theta')\|_1}}{\big(|\theta'_i|-\epsilon\big) \Big(1 + \frac{2d_0\epsilon}{\|S_\epsilon(\theta')\|_1}\Big)} \\
      & = \frac{2d_0\epsilon}{\|S_\epsilon(\theta')\|_1 + 2d_0\epsilon}  \\
      & \leq  \frac{2d_0\epsilon}{\|\tilde \theta\|_1} \leq \frac{2d_0\epsilon}{\|\theta\|_1} \,.
  \end{align*}
\end{itemize}
Substituting the obtained bounds in each cases into Inequality~\eqref{eq:D1i} concludes the proof.

\subsection{Proof of Theorem~\ref{thm:adversarial}}

For technical reasons, we perform the proof for $\theta \in \cB_{1/2}$ only. However, optimization on $\cB_1$ can be obtained by renormalizing the losses considering $\ell_t(2 \theta)$ instead of $\ell_t$. We leave this generalization for the reader. For simplicity, we also assume that $T = 2^I-1$. Let $\theta \in \cB_{1/2}$ and denote $d_0 = \|\theta\|_0$.

\emph{Part 1 ($\tilde \cO(\sqrt{T})$ regret -- logarithmic dependence on $d_0$ and $d$)}
First, we prove the slow rate bound obtained by Algorithm~\ref{alg:BOA}. Let $i \geq 0$. Denote by $\theta_1,\dots,\theta_{3d}$ the $3d$ elements of $\Theta^{(i)}$. 
For any distribution $\pi \in \Delta_{3d}$ over $\Theta^{(i)}$, we have from Inequality~\eqref{eq:firstregret}:
  \begin{equation}
    \label{eq:firstregret1}
     \sum_{t=t_i}^{t_{i+1}-1} \E_{k \sim \pi}[r_{k,t}] \leq  \E_{k \sim \pi}\left[ 4 \sqrt{a_k V_k}  + 6 a_k E\right] + \frac{2}{T}.
  \end{equation}
where we recall $r_{k,t} \leq \nabla\ell_t(\hat \theta_{t-1})^\top (\hat \theta_{t-1} - \theta_k)$, $a_k := \log(\pi_k/\pi_{k,0}) + \log \log (ET^2) \leq \log(3d) + \log\log (ET^2) =: a$ and $V_k \leq \sum_{t=t_i}^{t_{i+1}-1} r_{k,t}^2 \leq t_i G^2$. Let $\pi$ such that $\theta = \sum_{k=1}^{3d} \pi_k \theta_k$, then thanks to the convexity assumption on the losses, we have
\[
  \ell_t(\hat \theta_{t-1}) - \ell_t(\theta) \leq \nabla \ell_t(\hat \theta_{t-1})^\top (\hat \theta_{t-1} - \theta_k) =  \E_{k \sim \pi}[r_{k,t}] \,.
\] 
Therefore, Inequality~\eqref{eq:firstregret1} yields
\[
  \sum_{t=t_i}^{t_{i+1}-1} \ell_t(\hat \theta_{t-1}) - \ell_t(\theta) \leq 4 G \sqrt{ a t_i} + 6 E a + \frac{2}{T} \,.
\]
Summing over $i=0,\dots,j-1$ and substituting $t_i = 2^i$ we get for any $j\geq 1$:
\begin{equation}
  \Reg_j(\theta) := \sum_{t=1}^{t_{j}-1} \ell_t(\hat \theta_{t-1}) - \ell_t(\theta) 
    \leq 4 G \sqrt{ a} \sum_{i=0}^{j-1} 2^{i/2}  + \underbrace{6 E a j + \frac{2j}{T}}_{ =: z} \ 
     \leq 10 G \sqrt{ a}2^{j/2}  + z \,,
    \label{eq:regretslow}
\end{equation}
where $a = \log(2d) + \log\log (ET^2)$. In particular for $j = I \lesssim \log T$ we obtain the first inequality stated by the theorem:
\[
  R_T(\theta) \leq  \cO \left(G \sqrt{\frac{\log d + \log\log (ET)}{T}}\right) \,.
\] 

\emph{Part 2 ($\tilde \cO(T^{1/4})$ regret -- logarithmic dependence on $d$)} 
We prove by induction the second bound of the Theorem: that for some $c>0$ and all $j\geq 0$, we have
\begin{equation}
  \Reg_j(\theta) \leq  48 \frac{a d_0 c^2 G^2}{\mu} + j \frac{caG^2}{\mu} + 16\sqrt{5} c \sqrt{\frac{d_0 \left(G \sqrt{a}\right)^{3}}{\mu}}  \sum_{k=0}^j   2^{-\frac{3j}{4}}  \,.
  \label{eq:induction2}
\end{equation}
Indeed, decomposing the cumulative regret, we have
\[
  \Reg_{j+1}(\theta) = \Reg_{j}(\theta) + \sum_{t=t_j}^{t_{j+1}-1} \ell_t(\hat \theta_{t-1}) - \ell_t(\theta) \,.
\]
Note that Assumption~\ref{ass:a2} is satisfied with $\beta = 1$, $\alpha = \mu/(2G^2)$ and without the expectation $\E_t$. It is worth to notice that the transformation of the second-order term into a cumulative risk performed in~\eqref{eq:beforeapplyingA2} was not needed here since Assumption~\ref{ass:a2} holds on the losses without the expectation $\E_t$. Therefore, the result of Theorem~\ref{thm:slowrate}, that we can apply from time instance $t_j = 2^j$ to $t_{j+1}-1$, holds almost surely with $x=0$, $\beta = 1$ and $\alpha = \mu/(2G^2)$. We get that it exists some constant $c>0$ such that
\[
  \sum_{t=t_j}^{t_{j+1}-1} \ell_t(\hat \theta_{t-1}) - \ell_t(\theta) \leq c G  D(\theta, [\theta^*_{j}]_{d_0}) \sqrt{a 2^j} + \frac{caG^2}{\mu} \,,
\]
with $a = \log (3d) + \log \log (ET^2)$.
Replacing into the preceding inequality, it yields
\begin{equation}
   \Reg_{j+1}(\theta) \leq \Reg_{j}(\theta) +  c G  D(\theta, [\theta^*_{j}]_{d_0}) \sqrt{a 2^j}  + \frac{c a G^2}{\mu} 
  \label{eq:regretboundi}
\end{equation}
Because $\theta \in \cB_{1/2}$, we obtain from Lemma~\ref{lem:rlarge} 
\begin{multline*}
  D\big(\theta,[\theta^*_{j}]_{d_0}\big) 
     \stackrel{\text{(Lem.~\ref{lem:rlarge})}}{\leq} 2 \big\|\theta - [\theta^*_{j}]_{d_0}\big\|_1   \stackrel{\|\theta\|_0 = \|[\theta^*_{j}]_{d_0}\|_0 = d_0}{\leq} 2 \sqrt{2d_0} \big\|\theta - [\theta^*_{j}]_{d_0}\big\|_2  \nonumber \\
     \leq 2 \sqrt{2d_0} \big(\big\|\theta - \theta^*_{j} \big\|_2 + \big\| \theta^*_{j}  -  [\theta^*_{j}]_{d_0}\big\|_2 \big)\,.
\end{multline*}
By definition of the hard threshold, for any $\theta$ such that $\|\theta\|_0 = d_0$, we have
\begin{equation*}
  \big\| \theta^*_{j}  -  [\theta^*_{j}]_{d_0}\big\|_2 \leq \big\| \theta^*_{j}  -  \theta \big\|_2 \,.
\end{equation*}
Therefore, plugging into the previous inequality
\begin{equation}
  D\big(\theta,[\theta^*_{j}]_{d_0}\big)  \leq 4\sqrt{2d_0} \big\|\theta - \theta^*_{j} \big\|_2  \,.
  \label{eq:metricnorme2}
\end{equation}
But because the losses are $\mu$-strongly convex, the average loss over several rounds is also $\mu$-strongly convex. And since $\theta^*_j :=  \argmin_{\theta \in \cB_{1/2}} \sum_{t=1}^{t_j-1} \ell_t(\theta)$, we have for all $\theta \in \cB_{1/2}$
\begin{align}
  \mu \big\|\theta - \theta^*_{j}\big\|_2^2 
    & \leq \frac{1}{2^j-1} \sum_{t=1}^{t_j-1} \ell_t(\theta) - \ell_t(\theta^*_j) \label{eq:relaxedstrongconvexity}\\
  & = \frac{\Reg_j(\theta^*_j)-\Reg_j(\theta)}{2^j-1} \leq  \frac{\Reg_j(\theta^*_j)-\Reg_j(\theta)}{2^{j-1}} \,.
  \nonumber 
\end{align}
Thus, from Inequality~\eqref{eq:metricnorme2}, we obtain
\[
  D\big(\theta,[\theta^*_{j}]_{d_0}\big)  \leq  8\sqrt{\frac{d_0 \big(\Reg_j(\theta^*_j)-\Reg_j(\theta)\big)}{\mu 2^j} }\,.
\]
Plugging into Inequality~\eqref{eq:regretboundi} gives 
\begin{equation}
  \Reg_{j+1}(\theta) \leq \Reg_{j}(\theta) + 8 c G   \sqrt{ \frac{ad_0}{\mu} \big(\Reg_j(\theta^*_j)-\Reg_j(\theta)\big)}  + \frac{caG^2}{\mu}\,.
  \label{eq:regretinduction}
\end{equation}
We can upper-bound $\Reg_j(\theta^*_j)$ using Inequality~\eqref{eq:regretslow}. This entails
\[
  \Reg_{j+1}(\theta) \leq \Reg_{j}(\theta) + 8 c G   \sqrt{ \frac{ad_0}{\mu} \big(10 G \sqrt{ a}2^{j/2}   + z - \Reg_j(\theta)  \big)}  + \frac{caG^2}{\mu}\,.
\]
Now we have an inequality of the form
\[
   \Reg_{j+1}(\theta) \leq \Reg_j(\theta) + x_1 \sqrt{x_2 - \Reg_j(\theta)} + x_3
\]
with $x_1 = 8cG\sqrt{ad_0/\mu}$, $x_2 = 10G \sqrt{a}2^{j/2} + z$ and $x_3 = {(caG^2)}/{\mu}$. If $\Reg_j(\theta) \geq 0$, $\Reg_{j+1}(\theta)$ is increased by at most $x_1\sqrt{x_2} + x_3$. Otherwise $\Reg_j(\theta) \leq 0$ and the right-hand side is at most $3 x_1^2/4 + x_3$ (considering the maximum over $\Reg_j(\theta)\leq 0$). Therefore,
\begin{align}
  \Reg_{j+1}(\theta) 
    & \leq \max\big\{ 3 x_1^2/4, (\Reg_j(\theta))_+ + x_1\sqrt{x_2}\big\}  + x_3 \,. \nonumber \\
    &  =  \max\left\{ 48 \frac{a d_0 c^2 G^2}{\mu}, \  (\Reg_j(\theta))_+ + 16\sqrt{5} c \sqrt{\frac{d_0}{\mu}} \left(G \sqrt{\frac{a}{2^j}}\right)^{3/2}\right\} + \frac{caG^2}{\mu}\,.
  \label{eq:inductionplus}
\end{align}
This concludes the induction, using the hypothesis~\eqref{eq:induction2}.  
In particular, considering $j = I = \log_2(T-1)$, we proved that 
\[
  R_T(\theta)  = \frac{\Reg_{I}(\theta)}{T} \leq \cO \left( \sqrt{\frac{d_0}{\mu}} \left(G \sqrt{\frac{\log d + \log \log (ET)}{T}}\right)^{\frac{3}{2}} \right) \,.
\]

\bigskip
\emph{Part 3. ($\tilde \cO(1)$ regret -- square root dependence on $d$)} Now, we prove a faster rate but at the price of a square root dependence in the total dimension $d$. The proof follows the same lines as the preceding part except that one changes the induction hypothesis and that one uses it to bound the regret of $\theta^*_j$. We prove by induction: it exists $c_0>0$ such that for any $\theta \in \cB_{1/2}$
\[
  \Reg_j(\theta) \leq j \frac{ac_0 \sqrt{\|\theta\|_0 d} G^2}{\mu T}   
\]
where $a = \log (3d) + \log \log (ET^2)$.
We start from Inequality~\eqref{eq:regretinduction} obtained in Part~2: 
\begin{equation*}
  \Reg_{j+1}(\theta) \leq \Reg_{j}(\theta) + 8 c G   \sqrt{ \frac{ad_0}{\mu} \big(\Reg_j(\theta^*_j)-\Reg_j(\theta)\big)}  + \frac{caG^2}{\mu}\,.
\end{equation*}
Now, instead of upper-bounding $\Reg_j(\theta^*_j)$  using Inequality~\eqref{eq:regretslow}, we  use the induction hypothesis itself. Since $\theta^*_j$ is not necessarily sparse, we have
\[
  \Reg_j(\theta^*_j)  \leq  j \frac{ac_0d  G^2}{\mu}  \,,
\]
which entails
\[
  \Reg_{j+1}(\theta) \leq \Reg_{j}(\theta) + 8 c G   \sqrt{ \frac{ad_0}{\mu} \Big(j \frac{ac_0d  G^2}{\mu }   -\Reg_j(\theta)\Big)}  +\frac{caG^2}{\mu} \,.
\]
We obtain a regret bound of the same form than in Part~2:
\[
   \Reg_{j+1}(\theta) \leq \Reg_j(\theta) + x_1 \sqrt{x_2 - \Reg_j(\theta)} + x_3,
\]
with $x_1 = 8cG \sqrt{a d_0/\mu}$, $x_2 =  (j ac_0d  G^2)/\mu$ and $x_3 = caG^2/\mu$. Similarly to Inequality~\eqref{eq:inductionplus}, we have
\begin{align*}
  \Reg_{j+1}(\theta) 
    & \leq \max\big\{ 3 x_1^2/4, (\Reg_j(\theta))_+ + x_1\sqrt{x_2}\big\}  + x_3  \\
    & = \max\left\{48 \frac{a d_0 c^2 G^2}{\mu}, (\Reg_j(\theta))_+ + \frac{8c\sqrt{c_0}a \sqrt{d_0d} G^2 }{\mu}\right\}  + \frac{caG^2}{\mu} \\
    & \leq (\Reg_j(\theta))_+ + \frac{(49 + 8c\sqrt{c_0}) a \sqrt{d_0d} G^2 }{\mu} \,.
\end{align*}
Choosing $c_0>0$ such that $49 + 8c\sqrt{c_0} \leq c_0$  concludes the induction. In particular, considering $j=I = \log_2(T-1)$, we proved that 
\[
  R_T(\theta) \leq \cO \left( \frac{ \sqrt{d_0 d} G^2 (\log d + \log \log (ET)) \log T }{\mu T} \right) \,.
\]

\subsection{Proof of Theorem~\ref{thm:acceleratedBOA}}
\label{app:proof_SABOA}
    We recall that $ \Theta^\ast = \argmin_{ \theta \in \cB_1} \E[\ell_t( \theta)]$. The idea of the proof is to show that at each session $i$, SABOA performs BOA by adding  sparse estimators  in $\Theta^{(i)}$ that are exponentially closer to  $\Theta^\ast$.

    Let $x >0$. We prove by induction on $i \geq 0$ that with probability at least $1 - i e^{-x}$, it exists $\theta^\ast \in \Theta^*$ such that
    \begin{equation}
        \tag{$\mathcal{H}_i$}
        \label{eq:induction}
        D\big(\theta^*,\Theta^{(i)}\big)  \leq    \epsilon 2^{- \tau i} \,,
    \end{equation}
    where $D$ is defined in Definition~\ref{def:distance},
    \begin{equation}
    \epsilon := \max_{\theta^*\in\Theta^*}\left((8\sqrt{a}G )^\beta \max\left\{\frac{2}{\alpha G^2} , \frac{8\|\theta^*\|_0}{\mu}\min \Big\{
      \frac{8\|\theta^*\|_0}{\|\theta^*\|_1^2 }, 
      \frac{1}{(1-\|\theta^*\|_1)^2}
    \Big\} \right\}\right)^{\frac{1}{2-\beta}},
    \label{eq:choiceepsilon}
    \end{equation}
    and $\tau = \frac{1}{2-\beta} - \frac{1}{2}$. Remark that $\theta^*$ in \eqref{eq:induction} depends on $i$ when $\Theta^*$ is not a singleton.

    \emph{Initialization}. For $i=0$, by definition (see Algorithm~\ref{alg:SABOA}), $\Theta^{(0)} := \{0\}$ and $D(\theta^*,\{0\})\leq \|\theta^*\|_1\leq 1$. The initialization thus holds true as soon as $\epsilon > 1$.

    \emph{Induction step}. Let $i \geq 0$ and assume \eqref{eq:induction}.
     We start from Theorem~\ref{thm:slowrate} (see Inequality~\eqref{eq:thmslowrateexact} for the precise constants that we upper-bound here) that we apply for $t=t_{i-1},\dots,t_i-1$ and $\theta^*\in \Theta^*$ satisfying \eqref{eq:induction}: with probability $1-e^{-x}$
\[
  \frac{1}{2^{i-1}} \sum_{t=t_{i-1}}^{t_i-1} \E_{t-1}\big[\ell_t(\hat \theta_{t-1}) - \ell_t(\theta^*)\big]
  \le \frac{2 \sqrt{a} G D(\theta^*,\Theta^{(i)}) }{2^{(i-1)/2}}  + 4 \left(\frac{a}{\alpha 2^{i-1}} \right)^{\frac{1}{2-\beta}} + \frac{aE}{2^{i-1}} +\frac{2}{2^{2i-2}} \,,
\]
where for simplicity of notation we define
$a := 16 (1+\log (K_i)) + 16 \log \log (ET^2)+ 4x$, where $K_i := \Card(\Theta^{(i)}) + 2d$ denotes the number of experts used during the doubling session $i$, and where we used $t_i = t_{i-1} + 2^{i-1}$.
Using \eqref{eq:induction} together with Jensen's inequality and recalling $\bar \theta^{(i)} := 2^{-i+1} \sum_{t=t_{i-1}}^{t_i-1} \hat \theta_{t-1}$, we obtain
  \begin{equation}
    \E\big[\ell_t(\bar \theta^{(i)}) - \ell_t( \theta^\ast)\big] \leq  2\sqrt{2a} G  \epsilon  2^{-(\frac{1}{2}+\tau)i} + 4 \Big(\frac{a}{\alpha}\Big)^{\frac{1}{2-\beta}} 2^{-\frac{i}{2-\beta}}  + {aE2^{1-i}} + {2^{3-2i}} \,.
    \label{eq:compliquee}
  \end{equation}
  Now, we simplify this expression by showing that the last three terms of the right-hand side are negligible with respect to the first one. First, because $a\geq 16$ and $E\geq 1$, we have $16 \leq aE$ and thus  $2^{3-2i} \leq aE 2^{-1-i}$. Then, because $\epsilon \geq \sqrt{a}$, $aE \leq \sqrt{a}E\epsilon = \frac{4}{3} \sqrt{a}\epsilon G$ and thus
  \begin{equation}
      {aE2^{1-i}} + {2^{3-2i}} \leq \frac{3}{2}aE2^{-i} \leq 2\sqrt{a}\epsilon G 2^{-i} \stackrel{\tau \leq 1/2}{\leq} 2\sqrt{a}\epsilon G 2^{-(\frac{1}{2}+\tau)i} \,.
      \label{eq:dominated1}
  \end{equation}
  The second term is also dominated thanks to the definition of $\epsilon$ in~\eqref{eq:choiceepsilon}
  \[
    \epsilon \stackrel{\eqref{eq:choiceepsilon}}{\geq}\left(\frac{2(\sqrt{8a}G )^\beta}{\alpha G ^2}\right)^{\frac{1}{2-\beta}} \quad \Rightarrow \quad 2\sqrt{2a}G\epsilon \geq  \Big(\frac{16a}{\alpha}\Big)^{\frac{1}{2-\beta}} \stackrel{0\leq \beta\leq 1}{\geq} 4 \Big(\frac{a}{\alpha}\Big)^{\frac{1}{2-\beta}}
  \]
  and 
  \[
    \tau  \stackrel{\eqref{eq:choiceepsilon}}{=} \frac{1}{2-\beta} - \frac{1}{2} \quad \Rightarrow \frac{1}{2-\beta} \geq \frac{1}{2} + \tau 
  \]
  which yields
  \begin{equation}
      4 \Big(\frac{a}{\alpha}\Big)^{\frac{1}{2-\beta}} 2^{-\frac{i}{2-\beta}} \leq  2\sqrt{2a} G  \epsilon  2^{-(\frac{1}{2}+\tau)i}\,.
      \label{eq:dominated2}
  \end{equation}
  Thus replacing Inequalities~\eqref{eq:dominated1} and~\eqref{eq:dominated2} into Inequality~\eqref{eq:compliquee} and upper-bounding $4\sqrt{2}+2\leq 8$, we get for any $\theta^* \in \Theta^*$
  \begin{equation}
    \E\big[\ell_t(\bar \theta^{(i)}) - \ell_t( \theta^\ast)\big] \leq    8 \sqrt{a} G  \epsilon  2^{-(\frac{1}{2}+\tau)i}  \,.
    \label{eq:boundexcessloss}
  \end{equation}
  Using Assumption~\ref{ass:Lojasiewicz}, there exists at least one $\theta^* \in \Theta^*$ (which can be different from the preceding session), which satisfies
  \begin{equation}
     \big\|\bar \theta^{(i)} -  \theta^\ast \big\|_{\infty} \leq \big\|\bar \theta^{(i)} -  \theta^\ast \big\|_{2} \stackrel{\eqref{eq:boundexcessloss}+\ref{ass:Lojasiewicz}}{\leq}  (8\sqrt{a} G  \epsilon)^{\frac{\beta}{2}}  \mu^{-\frac{1}{2}}  2^{-(\frac{1}{2}+\tau)\frac{\beta}{2} i} =: \epsilon' \,.
     \label{eq:bartheta}
  \end{equation}
  Now, we want to apply Lemma~\ref{lem:softthreshold} if $\|\theta^*\|_1$ is close to 1 and Lemma~\ref{lem:rlarge} if $\|\theta^*\|_1<1$. 
  In order to apply Lemma~\ref{lem:rlarge}, we consider  hard-truncated estimators $[\bar \theta^{(i)}]_{\tilde d_0}$, canceling the $d-\tilde d_0$ smallest components  of $\bar \theta^{(i)}$ for $\tilde d_0 \in \{1,\dots, d\}$. For the (unknown) choice $\tilde d_0 = d_0$, since $\|[\bar \theta^{(i)}]_{d_0}\|_0 = \|\theta^*\|_0 = d_0$, we have $\|[\bar \theta^{(i)}]_{d_0}-\theta^*\|_0 \leq 2d_0$ and
  \begin{multline*}
    \big\|[\bar \theta^{(i)}]_{d_0} -  \theta^\ast \big\|_{1} \leq \sqrt{2d_0}\big\|[\bar \theta^{(i)}]_{d_0} -  \theta^\ast \big\|_{2} \leq \sqrt{2d_0} \big(\big\|[\bar \theta^{(i)}]_{d_0} - \bar \theta^{(i)}\big\|_2 + \big\|\bar \theta^{(i)} -  \theta^\ast \big\|_{2}\big) \\
     \leq 2\sqrt{2d_0} \big\|\bar \theta^{(i)} -  \theta^\ast \big\|_{2} \leq 2\sqrt{2d_0}\epsilon' \,.
  \end{multline*}
  Applying Lemma~\ref{lem:rlarge}, we get
  \begin{equation}
    D\big(\theta^*, [\bar \theta^{(i)}]_{d_0}\big) \leq \frac{\big\|[\bar \theta^{(i)}]_{d_0} -  \theta^\ast \big\|_{1}}{1-\|\theta^*\|_1} \leq \frac{2\sqrt{2d_0}\epsilon'}{1-\|\theta^*\|_1} \,.
    \label{eq:D1smallnorm}
  \end{equation}
  This bound is only useful for $\|\theta^*\|_1 <1$. Otherwise, we want to apply Lemma~\ref{lem:softthreshold}.  However the values of $\epsilon'$ and $d_0 = \|\theta^*\|_0$ are unknown. We approximate them with $\tilde \epsilon$ and $\tilde d_0$ on exponential grids, which we define now:
  \[
      \cG_{\epsilon'} = \big\{2^{-k},\quad k=0,\dots,i\big\} \quad \text{and} \quad \cG_{d_0} = \big\{1,2,\dots 2^{-\lfloor \log d\rfloor},d\big\} \,.
  \]
  We define for all $\tilde \epsilon \in \cG_{\epsilon'}$ and $\tilde d_0 \in \cG_{d_0}$ the dilated soft-threshold
  \begin{equation}
    \tilde \theta(\tilde \epsilon,\tilde d_0) := S_{\tilde \epsilon}(\bar \theta^{(i)}) \left(1+\frac{2\tilde d_0\tilde \epsilon}{\|S_{\tilde \epsilon}(\bar \theta^{(i)})\|_1}\right)\wedge \frac{1}{\|S_{\tilde \epsilon}(\bar \theta^{(i)})\|_1} \,,
    \label{eq:defthetatilde}
  \end{equation}
  with the convention $\frac00=0$, recalling the definition of the soft-threshold operator $S_\epsilon(x)_i=\sign(x_i)(|x_i|-\epsilon)_+$ for all $1\le i\le d$.
  Because $\epsilon' \geq 2^{-i}$ (using $\epsilon \geq \sqrt{a} \geq 4$, $G\geq 1$ and $\tau \leq 1/2$ and $\mu\ge1$) and $\big\|\bar \theta^{(i)} -  \theta^\ast \big\|_{\infty} \leq 1$, it exists $\tilde \epsilon \in \cG_{\epsilon'}$ such that $\tilde \epsilon \leq 2\epsilon'$ and $\big\|\bar \theta^{(i)} -  \theta^\ast \big\|_{\infty} \leq \tilde \epsilon$. Furthermore, it exists also $\tilde d_0 \in \cG_{d_0}$ such that $d_0 \leq \tilde d_0 \leq 2d_0$. We can thus apply Lemma~\ref{lem:softthreshold}, which yields
  \begin{equation}
    D(\theta^*,\tilde \theta(\tilde \epsilon,\tilde d_0)\big) \leq \frac{2\tilde d_0 \tilde \epsilon}{\|\theta^*\|_1} \leq \frac{8 d_0  \epsilon'}{\|\theta^*\|_1}  \,.
    \label{eq:D1largenorm}
  \end{equation}
  We define the new approximation grid 
  \begin{equation}
    \Theta^{(i+1)} := \big\{\tilde \theta(\tilde \epsilon,\tilde d_0), \tilde \epsilon \in \cG_{\epsilon'}, \tilde d_0 \in \cG_{d_0}\big\} \cup \{[\bar\theta^{(i)}]_{\tilde d_0},\quad \tilde d_0 = 1,\dots, d\big\} \,,
    \label{eq:defgrid}
  \end{equation}
  where $\tilde \theta(\tilde \epsilon,\tilde d_0)$ is defined in Equation~\eqref{eq:defthetatilde} and $[\cdot]_k$ are hard-truncations to $k$ coordinates. We get from Inequality~\eqref{eq:D1smallnorm} and~\eqref{eq:D1largenorm} that 
  \begin{align*}
    D\big(\theta^*,\Theta^{(i+1)}\big) 
      & \leq \min\left\{ \frac{\sqrt{8d_0}}{1-\|\theta^*\|_1}, \frac{8d_0}{\|\theta^*\|_1} \right\}\epsilon' \\
      & \stackrel{\eqref{eq:bartheta}}{=}  (8\sqrt{a} G  \epsilon)^{\frac{\beta}{2}}  \mu^{-\frac{1}{2}} \min\left\{ \frac{\sqrt{8d_0}}{1-\|\theta^*\|_1}, \frac{8d_0}{\|\theta^*\|_1} \right\}  2^{-(\frac{1}{2}+\tau)\frac{\beta}{2} i} \,.
  \end{align*}

  To conclude the induction, it suffices to show that this is smaller then $\epsilon 2^{-\tau (i+1)}$.  Our choices of $\epsilon$ and $\tau$ defined in~\eqref{eq:choiceepsilon} was done in that purpose, so that the induction is completed.

  \emph{Conclusion}. Substituting the values of $\epsilon$ and $\tau$ into Inequality~\eqref{eq:boundexcessloss} and using the choice $i = \log_2 T$ (which upper-bound the number of sessions after $T$ times steps) concludes the proof:
\begin{align*}
    \E\big[\ell_t(\bar \theta^{(i)}) - \ell_t( \theta^\ast) \big]
       & \stackrel{\text{Jensen}}{\leq}  \frac{R_T^{(i)}}{2^i}  \\
       & \stackrel{\eqref{eq:boundexcessloss}}{\leq} 8 \sqrt{a} G  \epsilon  2^{-(\frac{1}{2}+\tau)i} \\
       & \stackrel{\eqref{eq:choiceepsilon}}{\leq} \max_{\theta^*\in\Theta^*} \left(\frac{128 a}{T} \max\left\{\frac{1}{\alpha}, \frac{4G^2\|\theta^*\|_0}{\mu} \min\left\{\frac{1}{(1-\|\theta^*\|_1)^2},\frac{8\|\theta^*\|_0}{\|\theta^*\|_1^2} \right\} \right\}\right)^{\frac{1}{2-\beta}} \,,
\end{align*}
where we recall that $a := 16 (1+\log (K_i) + \log \log (ET^2))+ 4x$, where $K_i := \Card(\Theta^{(i)}) + 2d \leq (1+\log_2 d)(1+\log_2 T) + d $.
  Summing over $i=1,\dots,\log_2(T)$, we get the upper-bound for the cumulative risk.

\end{document}